\newtheorem{theorem}{Theorem}[section]
\newtheorem{proposition}[theorem]{Proposition}
\newtheorem{corollary}[theorem]{Corollary}
\newtheorem{lemma}[theorem]{Lemma}
\newtheorem{definition}[theorem]{Definition}
\newtheorem*{mainresult}{Main result}
\newtheorem*{Bohrstheorem}{Bohr's theorem}
\newtheorem{remark}[theorem]{Remark}
\begin{document}

\title{On Bohr's theorem for general Dirichlet series}

\author[Schoolmann]{Ingo Schoolmann}
\address[]{Ingo Schoolmann\newline  Institut f\"{u}r Mathematik,\newline Carl von Ossietzky Universit\"at,\newline
26111 Oldenburg, Germany.
}
\email{ingo.schoolmann@uni-oldenburg.de}

\maketitle

\begin{abstract}
\noindent
We present quantitative versions of Bohr's theorem on general Dirichlet series $D=\sum a_{n} e^{-\lambda_{n}s}$ assuming different assumptions on the frequency $\lambda=(\lambda_{n})$, including the conditions introduced by Bohr and Landau. Therefore, using the summation method by typical (first) means invented by M. Riesz, without any condition on $\lambda$, we give upper bounds for the norm of the partial sum operator $S_{N}(D):=\sum_{n=1}^{N} a_{n}(D)e^{-\lambda_{n}s}$ of length $N$ on the space $\mathcal{D}_{\infty}^{ext}(\lambda)$ of all somewhere convergent $\lambda$-Dirichlet series, which allow a holomorphic and bounded extension to the open right half plane $[Re>0]$. As a consequence for some classes of $\lambda$'s we obtain a Montel theorem in $\mathcal{D}_{\infty}(\lambda)$; the space of all $D \in \mathcal{D}_{\infty}^{ext}(\lambda)$ which converge on $[Re>0]$. Moreover, following the ideas of Neder we give a construction of frequencies $\lambda$ for which $\mathcal{D}_{\infty}(\lambda)$ fails to be complete.

\end{abstract}
\maketitle                   






\section{Introduction}
A general Dirichlet series is a formal sum $\sum a_{n} e^{-\lambda_{n}s}$, where $(a_{n})$ are complex coefficients (called Dirichlet coefficients), $s$ a complex variable and $\lambda:=(\lambda_{n})$ a strictly increasing non negative real sequence tending to $+\infty$  (called frequency). To see  first examples we choose $\lambda=(\log n )$ and obtain ordinary Dirichlet series $\sum a_{n} n^{-s}$, whereas the choice $\lambda=(n)=(0,1,2,\ldots)$ leads to formal power series $\sum a_{n} z^{n}$ regarding the substitution $z=e^{-s}$.\\

 Within the last two decades the theory of  ordinary Dirichlet series had a sort of renaissance which in particular led to the solution of some long-standing problems (see \cite{Defant} and \cite{QQ} for more information). A fundamental object in these investigations
 is given by the Banach space $\mathcal{H}_{\infty}$  of all ordinary Dirichlet series $D:=\sum a_{n} n^{-s}$, which converge and define bounded functions on $[Re>0]$.\\

One of the main tools in this theory is the fact that every ordinary Dirichlet series $D \in \mathcal{H}_{\infty}$ converges uniformly on $[Re>\varepsilon]$ for all $\varepsilon>0$, which is a consequence of what is called Bohr's theorem and was proven by Bohr in \cite{Bohr}.
\begin{Bohrstheorem}[qualitative version] \label{Bohrstheoordinaryquali} Let $D=\sum a_{n} n^{-s}$ be a somewhere convergent ordinary Dirichlet series having a holomorphic and bounded extension $f$ to $[Re>0]$. Then $D$ converges uniformly on $[Re>\varepsilon]$ for all $\varepsilon>0$.
\end{Bohrstheorem}
Several years ago in \cite{BalasubramanianCaladoQueffelec} this 'ordinary' result was improved by a quantitative version.
\begin{Bohrstheorem}[quantitative version] There is a constant $C>0$ such that for all somewhere convergent $D=\sum a_{n} n^{-s}$ allowing a holomorphic and bounded extension $f$ to $[Re>0]$ and for all $N \in \mathbf{N}$ with $N\ge 2$
\begin{equation} \label{BTordinary}
\sup_{[Re>0]} \left| \sum_{n=1}^{N} a_{n} n^{-s}\right| \le C \log(N) \sup_{[Re>0]} |f(s)|.
\end{equation}
\end{Bohrstheorem}
\noindent  An important consequence is that Bohr's theorem implies that $\mathcal{H}_{\infty}$ is a Banach space (see \cite[\S1.4]{Defant}). \\

The natural domain of Bohr's theorem for general Dirichlet series is the space $\mathcal{D}_{\infty}^{ext}(\lambda)$ of all somewhere convergent $\lambda$-Dirichlet series $D=\sum a_{n}e^{-\lambda_{n}s}$ allowing a holomorphic and bounded extension $f$ to $[Re>0]$. Additionally we define the subspace $\mathcal{D}_{\infty}(\lambda)$ of all $D \in \mathcal{D}_{\infty}^{ext}(\lambda)$ which converge on $[Re>0]$. Notice that with this notation we have $\mathcal{D}_{\infty}((\log n))=\mathcal{H}_{\infty}$. The inclusion $\mathcal{D}_{\infty}(\lambda) \subset \mathcal{D}^{ext}_{\infty}(\lambda)$ in general is strict (see e.g. the frequencies constructed  in \cite[\S1]{Neder}). A natural norm on $\mathcal{D}_{\infty}^{ext}(\lambda)$ (and on $\mathcal{D}_{\infty}(\lambda)$) is given by $\|D\|_{\infty}:=\sup_{[Re>0]} |f(s)|$, where $f$ is the (unique) extension of $D$. Note that a priori, $\|\cdot\|_{\infty}$ is only a semi norm, that is it could be possible for a particular Dirichlet series $D=\sum a_{n}e^{-\lambda_{n}s}$ with some $a_{n}\ne 0$ to have a bounded holomorphic extension $f$ with $\|D\|_{\infty}=0$, or equivalently, it is not clear whether
 $\mathcal{D}^{ext}_{\infty}(\lambda)$ can be considered as a subspace of $H_{\infty}[Re>0]$, the Banach space of all holomorphic and bounded functions on $[Re>0]$. Here it is important to distinguish Dirichlet series from their limit function, and  to prove that $\|\cdot\|_{\infty}$  in fact is a norm on $\mathcal{D}^{ext}_{\infty}(\lambda)$  requires to check
that  all Dirichlet coefficients of $D$ vanish provided $\|D\|_\infty =0$ (see Corollary \ref{almost periodic}).\\

We say that a frequency $\lambda$ satisfies Bohr's theorem (or Bohr's theorem holds for $\lambda$) if every $D\in \mathcal{D}^{ext}_{\infty}(\lambda)$ converges uniformly on $[Re>\varepsilon]$ for all $\varepsilon>0$. It was a prominent question in the beginning of the 20th century for which $\lambda$'s Bohr's theorem hold.\\

Actually Bohr proves his theorem not only for the case $\lambda=(\log n)$ but for the class of $\lambda$'s satisfying the following condition (we call it Bohr's condition $(BC)$):
\begin{equation} \label{BC}
\exists ~l = l (\lambda) >0 ~ \forall ~\delta >0 ~\exists ~C>0~\forall~ n \in \mathbf{N}: ~~\lambda_{n+1}-\lambda_{n}\ge Ce^{-(l+\delta)\lambda_{n}};
\end{equation}
roughly speaking this condition prevents the $\lambda_n$'s from getting too close too fast. Then in \cite{Bohr} Bohr shows that if $\lambda$ satisfies $(BC)$, then Bohr's theorem hold for $\lambda$. Note that $\lambda=(\log n)$ satisfies $(BC)$ with $l=1$.\\

In \cite{Landau} Landau gives another sufficient condition (we call it Landau's condition $(LC)$), which is weaker than $(BC)$ and extends the class of frequencies for which Bohr's theorem hold:
\begin{equation} \label{LC}
\forall ~\delta>0 ~\exists ~C>0~ \forall ~n \in \mathbf{N} \colon ~~ \lambda_{n+1}-\lambda_{n}\ge C e^{-e^{\delta\lambda_{n}}}.
\end{equation} We like to mention that in \cite[\S 1]{Neder} Neder goes a step further and considered $\lambda$'s satisfying
\begin{equation*}
\exists ~x>0 ~\exists ~C>0~ \forall ~n \in \mathbf{N} \colon ~~ \lambda_{n+1}-\lambda_{n}\ge C e^{-e^{x\lambda_{n}}}.
\end{equation*}
Then Neder proved that this condition is not sufficient for satisfying Bohr's theorem by constructing, giving some $x>0$, a Dirichlet series $D$ (belonging to some frequency $\lambda$) for which $\sigma_{c}(D)=\sigma_{a}(D)=x$ and $\sigma_{b}^{ext}(D)\le 0$ hold. In particular this shows that the inclusion $\mathcal{D}_{\infty}(\lambda) \subset \mathcal{D}^{ext}_{\infty}(\lambda)$ is strict for these $\lambda$'s.\\

Like Bohr, Landau under his condition $(LC)$ only proves the qualitative version of Bohr's theorem. Of course, establishing quantitative versions means to control the norm of the partial sum operator
\begin{equation}  \label{werder}
S_{N} \colon \mathcal{D}_{\infty}^{ext}(\lambda) \to \mathcal{D}_{\infty}(\lambda), ~~ D \mapsto \sum_{n=1}^{N}a_{n}(D) e^{-\lambda_{n}s}, ~ N \in \mathbf{N},
\end{equation}
since by definition
\begin{equation*}
\sup_{[Re>0]} \left| \sum_{n=1}^{N} a_{n} e^{-\lambda_{n}s} \right| \le \|S_{N}\| \|D\|_{\infty}.
\end{equation*}
Then using the summation method of typical means of order $k>0$ invented by M. Riesz (Proposition \ref{approx}), our main result gives an estimate of $\|S_{N}\|$ without assuming any condition on $\lambda$ (Theorem \ref{keylemma}).
\begin{mainresult} For all $0<k\le 1$ and $N\in \mathbf{N}$ we have
\begin{equation} \label{keyestimate}
\|S_{N}\|\le C \frac{\Gamma(k+1)}{k} \left(\frac{\lambda_{N+1}}{\lambda_{N+1}-\lambda_{N}} \right)^{k},
\end{equation}
where $\Gamma$ is the Gamma function and $C>0$ a universal constant.
\end{mainresult}
As a consequence assuming Bohr's condition $(\ref{BC})$ on $\lambda$ the choice $k_{N}:=\frac{1}{\lambda_{N}}$, $N\ge 2$ (since $\lambda_{1}=0$ is possible), leads to
\begin{equation*}
\|S_{N}\|\le C_{1}(\lambda) \lambda_{N},
\end{equation*}
which reproves $(\ref{BTordinary})$ for $\lambda=(\log n)$. Under Landau's condition $(\ref{LC})$ using $(\ref{keyestimate})$ with $k_{N}:=e^{-\delta \lambda_{n}}$, $\delta>0$, we obtain
\begin{equation} \label{ruecken}
\|S_{N}\|\le C_{2}(\lambda, \delta) e^{\delta\lambda_{N}};
\end{equation}
the quantitative version of Bohr's theorem under $(LC)$. As a consequence of $(\ref{ruecken})$ we extend Bayart's Montel theorem from the ordinary case (see \cite[\S 4.3.3, Lemma 18]{Bayart}) to $\mathcal{D}_{\infty}(\lambda)$ in the case of $\lambda$'s satisfying $(LC)$ (Theorem \ref{montel}).\\

Another application of the summation method of typical means gives an alternative proof of the fact that  $\mathbf{Q}$-linearly independent $\lambda$'s (that is $\sum q_{n} \lambda_{n}=0$ implies $q=0$ for all finite rational sequences $q=(q_{n})$) satisfy Bohr's theorem, which was proven by Bohr in \cite{Bohr3}. More precisely we show that in this case the space $\mathcal{D}_{\infty}^{ext}(\lambda)$ equals $\ell_{1}$ (as Banach spaces) via  $\sum a_{n} e^{-\lambda_{n}s} \mapsto (a_{n})$ (Theorem \ref{independent}).\\

Moreover, we would like to consider $\mathcal{D}_{\infty}(\lambda)$ as a Banach space. Unfortunately it may fail to be complete. Based on ideas of Neder we give a construction of $\lambda$'s for which $\mathcal{D}_{\infty}(\lambda)$ is not complete (Theorem \ref{incomplete}). But there are sufficient conditions on $\lambda$, including $(BC)$ and $\mathbf{Q}$-linearly independence, we present in Theorem \ref{completeness}.\\

Before we start let us mention that recently in  \cite{Maestre} given a frequency $\eta$ the authors introduced the space $\mathcal{H}_{\infty}(\eta)$ of all series of the form $\sum b_{n} \eta_{n}^{-s}$, which converge and define a bounded function on $[Re>0]$. Then  defining $\lambda:=(\log(\eta_{n}))$ we have $\mathcal{H}_{\infty}(\eta)=\mathcal{D}_{\infty}(\lambda)$ and so both approaches are equivalent in this sense. 
All results on $\mathcal{H}_{\infty}(\eta)$  in \cite{Maestre} are based on  the assumption that $\lambda$ satisfies the condition $(BC)$.
In contrast to this article, we here try to avoid assumptions on $\lambda$ as much as possible.\\

This text is inspired by the work of (in alphabetical order) Besicovitch, Bohr, Hardy,  Landau, Neder, Perron, M. Riesz. In Section \ref{means} we prove our main result and in Section \ref{Bohrstheosection} we apply it to obtain quantitative variants of Bohr's theorem under different assumptions on $\lambda$, including $(BC)$ and $(LC)$. We finish by Section \ref{completenesschapter}, where we face completeness of $\mathcal{D}_{\infty}(\lambda)$. We start  recalling some basics  on Dirichlet series.
\section{General Dirichlet series} \label{gDs}
As already mentioned in the introduction a strictly increasing non negative real sequence $\lambda:=(\lambda_{n})$ tending to $+\infty$  we call a frequency. Then general Dirichlet series  $D=\sum a_{n} e^{-\lambda_{n}s}$ belonging to some $\lambda$ we call $\lambda$-Dirichlet series and we define $\mathcal{D}(\lambda)$ to be the space of all (formal) $\lambda$-Dirichlet series. Moreover, the (complex) coefficient $a_{n}$ is called the $n$th Dirichlet coefficients of $D$. Finite sums $\sum_{n=1}^{N} a_{n}e^{-\lambda_{n}s}$ are called Dirichlet polynomials.\\

Recall that the natural domains of convergence of Dirichlet series are half spaces (see \cite[Theorem 1, p. 3]{HardyRiesz}). The following 'abscissas' rule the convergence theory of general Dirichlet series.
\begin{align*}
&
\sigma_{c}(D)=\inf\left \{ \sigma \in \mathbf{R} \mid D \text{ converges on } [Re>\sigma] \right\},
\\&
\sigma_{a}(D)=\inf\left \{ \sigma\in \mathbf{R} \mid D \text{ converges absolutely on } [Re>\sigma] \right\},
\\&
\sigma_{u}(D)=\inf\left \{ \sigma \in \mathbf{R} \mid D \text{ converges uniformly on } [Re>\sigma] \right\},
\end{align*}
and
\begin{equation*}
\sigma_{b}(D)=\inf\left \{ \sigma \in \mathbf{R} \mid  D \text{ converges and defines a bounded function on } [Re>\sigma] \right\}.
\end{equation*}
Additionally we define, provided $\sigma_{c}(D)<\infty$,
\begin{align*}
\sigma_{b}^{ext}(D)=\inf\{ \sigma \in \mathbf{R} \mid  &\text{ the limit function of $D$ allows a holomorphic and bounded}\\ &\text{ extension to } [Re>\sigma] \}.
\end{align*}

By definition $\sigma_{c}(D)\le \sigma_{b}(D) \le\sigma_{u}(D)\le \sigma_{a}(D)$ and $\sigma_{b}^{ext}(D)\le \sigma_{b}(D)$. In  general all these abscissas differ. For instance an example of Bohr shows that  $\sigma_{c}(D)=\sigma_{b}^{ext}(D)=\sigma_{b}(D)=-\infty$ and $\sigma_{u}(D)=+\infty$ is possible (see \cite{Bohr4}). Moreover, general Dirichlet series define holomorphic functions on $[Re>\sigma_{c}(D)]$, which relies on the fact that they converge uniformly on all compact subsets of $[Re>\sigma_{c}(D)]$ (see \cite[Theorem 2, p. 3]{HardyRiesz}). Let us recall the spaces of Dirichlet series already defined in the introduction.
\begin{definition} \label{Dinfty} Let $\lambda$ be a frequency. We define the space $\mathcal{D}_{\infty}(\lambda)$ as the space of all $\lambda$-Dirichlet series $D$ which converge on $[Re>0]$ and define a  bounded function there.
\end{definition}
\begin{definition}
We define $\mathcal{D}_{\infty}^{ext}(\lambda)$ to be the  space of all somewhere convergent Dirichlet series $D\in \mathcal{D}(\lambda)$, which allow a holomorphic and bounded extension $f$ to $[Re>0]$.
\end{definition}
We endow $\mathcal{D}^{ext}_{\infty}(\lambda)$ with the semi norm $\|D\|_{\infty}:=\sup_{[Re>0]} |f(s)|$, where $f$ is the (unique) extension of $D$. Corollary \ref{almost periodic} proves that $\|\cdot\|_{\infty}$ in fact is a norm. Bohr's theorem (say in the ordinary case) motivates to give the following definition.
\begin{definition} \label{defbohrtheorem}
We say that a frequency $\lambda$ satisfies Bohr's theorem (or Bohr's theorem hold for $\lambda$), whenever every $D \in \mathcal{D}^{ext}_\infty(\lambda)$ converges uniformly on $[\text{Re} > \varepsilon]$ for  all
$\varepsilon >0$.
\end{definition}
Observe that $\lambda$ satisfies Bohr's theorem if and only if $\sigma_{b}^{ext}=\sigma_{u}$ for all somewhere convergent $\lambda$-Dirichlet series.\\

Let us consider again $\lambda=(n)=(0,1,2,\ldots)$ to see an easy example. Then $(BC)$ holds and via the substitution $z=e^{-s}$, which maps the open right half plane $[Re>0]$ to the open punctured  unit disc $\mathbf{D}\setminus \{0\}$, the space $\mathcal{D}_{\infty}((n))$ coincides with $H_{\infty}(\mathbf{D})$; the space of all holomorphic and bounded functions on $\mathbf{D}$. In this case Bohr's theorem states the fact, that if a power series $P(z)=\sum c_{n} z^{n}$ converging on some neighbourhood of the origin allows an extension $g \in  H_{\infty}(\mathbf{D})$, then $P$ actually converges in $\mathbf{D}$ and coincides with $g$ with  uniform convergence on each closed disk contained in $\mathbf{D}$.
\subsection{A Bohr-Cahen formula}
There are useful Bohr-Cahen formulas for the abscissas $\sigma_{c}$ and $\sigma_{a}$, that are, given $D=\sum a_{n}e^{-\lambda_{n}s}$,
\begin{equation*}
\sigma_{c}(D)\le \limsup_{N} \frac{\log\left( \left| \sum_{n=1}^{N} a_{n}\right| \right) }{\lambda_{N}} ~\text{ and  } ~\sigma_{a}(D)\le \limsup_{N} \frac{\log\left(  \sum_{n=1}^{N} |a_{n}| \right) }{\lambda_{N}},
\end{equation*}
 where in each case equality holds if the left hand side is non negative.
See \cite[Theorem 7 and 8, p. 8]{HardyRiesz} for a proof. The formula for $\sigma_{u}$ (and its proof) extends from the ordinary case in \cite[\S 1.1, Proposition 1.6]{Defant} canonically to arbitrary $\lambda$'s:
\begin{equation} \label{sigmaU}
\sigma_{u}(D)\le \limsup_{N} \frac{\log\left(\sup_{t \in \mathbf{R}} \left|\sum^{N}_{n=1} a_{n}e^{-\lambda_{n}it}\right|\right)}{\lambda_{N}},
\end{equation}
where again the equality holds if the left hand side is non negative. In this section we derive (\ref{sigmaU}) from the following Proposition concerning uniform convergence of sequences of Dirichlet series and take advantage of both in Section \ref{Bohrstheosection}. Then the particular case of a sequence of partial sums will reprove $(\ref{sigmaU})$.\\

 Therefore given a sequence of (formal) $\lambda$-Dirichlet series $D_{j}=\sum a^{j}_{n} e^{-\lambda_{n}s}$ we define
\begin{equation*}
\Delta=\Delta((D_{j})):=\limsup_{(N,j)\in \mathbf{N}^{2}} ~\frac{\log\left(\sup_{t \in \mathbf{R}}\left|\sum_{n=1}^{N}a_{n}^{j} e^{-\lambda_{n}it}\right|\right)}{\lambda_{N}},
\end{equation*}
where we endow $\mathbf{N}^{2}$ with the product order, that is $(a,b)\le (c,d)$ if and only if $a\le c$ and $b\le d$.
\begin{proposition} \label{BohrCahen}
Let $\lambda$ be frequency and $D_{j}=\sum a_{n}^{j} e^{-\lambda_{n}s}$ a sequence of $\lambda$-Dirichlet series, such that the limits $a_{n}:= \lim_{j\to \infty}a_{n}^{j}$ exist for all $n$.
Then $(D_{j})$ converges uniformly on $[Re >\Delta+\varepsilon]$ to $D:=\sum a_{n} e^{-\lambda_{n}s}$ for all $\varepsilon >0$.
\end{proposition}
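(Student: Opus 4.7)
The plan is a standard summation-by-parts (Bohr-Cahen) argument, run jointly in $N$ and $j$. Write $A_N^j(t):=\sum_{n=1}^{N} a_n^j e^{-\lambda_n it}$ and $A_N(t):=\sum_{n=1}^{N} a_n e^{-\lambda_n it}$, and fix $0<\varepsilon'<\varepsilon$. By definition of $\Delta$ (as a $\limsup$ in the product order on $\mathbf{N}^2$), there exist $N_0, j_0$ such that
\[
\sup_{t\in \mathbf{R}}|A_N^j(t)|\le e^{(\Delta+\varepsilon')\lambda_N}\qquad \text{for all } N\ge N_0,\ j\ge j_0.
\]
For each \emph{fixed} $N$, the finite sum $A_N^j(t)-A_N(t)=\sum_{n=1}^{N}(a_n^j-a_n)e^{-\lambda_n it}$ tends to $0$ uniformly in $t$ as $j\to\infty$, so the bound passes to the limit: $\sup_t|A_N(t)|\le e^{(\Delta+\varepsilon')\lambda_N}$ for every $N\ge N_0$.

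Next I would estimate tails of $D_j$ and $D$ by Abel summation. For $s=\sigma+it$ with $\sigma>\Delta+\varepsilon$ and $N_0\le M\le N$, summation by parts gives
\[
\sum_{n=M+1}^{N} a_n^j e^{-\lambda_n s}=A_N^j(t)e^{-\lambda_N\sigma}-A_M^j(t)e^{-\lambda_{M+1}\sigma}+\sum_{n=M+1}^{N-1} A_n^j(t)\bigl(e^{-\lambda_n\sigma}-e^{-\lambda_{n+1}\sigma}\bigr),
\]
and the analogous identity with the coefficients $a_n$ in place of $a_n^j$. Using $|A_n^j(t)|,|A_n(t)|\le e^{(\Delta+\varepsilon')\lambda_n}$ together with $\sigma-(\Delta+\varepsilon')\ge \varepsilon-\varepsilon'>0$, the classical Bohr-Cahen computation shows that each such tail is bounded by a quantity of order $e^{-(\varepsilon-\varepsilon')\lambda_M}$, uniformly in $N\ge M$, in $j\ge j_0$, and in $s$ with $\re(s)\ge \Delta+\varepsilon$. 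In particular this already shows that $D_j$ (for $j\ge j_0$) and $D$ converge on $[\re>\Delta+\varepsilon]$.

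Finally I would combine the two ingredients. Given $\eta>0$, pick $M\ge N_0$ so that both tail bounds are $\le\eta/3$. On the finite head $\sum_{n=1}^{M}(a_n^j-a_n)e^{-\lambda_n s}$ each summand is bounded in modulus by $|a_n^j-a_n|\max(1,e^{-\lambda_n(\Delta+\varepsilon)})$, a constant depending only on $n\le M$; since $a_n^j\to a_n$ pointwise, the head tends to $0$ uniformly on $[\re>\Delta+\varepsilon]$ as $j\to\infty$, and is therefore $\le \eta/3$ for $j$ sufficiently large. Putting the three pieces together yields $|D_j(s)-D(s)|\le \eta$ uniformly on $[\re>\Delta+\varepsilon]$, as required. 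The only delicate point is the opening step: it is precisely the product-order $\limsup$ that furnishes a bound on $|A_N^j|$ valid for all large $N$ and $j$ simultaneously, which is what lets us pass to $j\to\infty$ at fixed $N$ and obtain the same estimate for $|A_N|$; once this is secured, everything else is the classical Abel-summation route to the Bohr-Cahen formula.
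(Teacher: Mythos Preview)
Your proposal is correct and follows essentially the same route as the paper: extract the uniform bound $\sup_t|A_N^j(t)|\le e^{(\Delta+\varepsilon')\lambda_N}$ from the product-order $\limsup$, use Abel summation to control the tails by $O(e^{-(\varepsilon-\varepsilon')\lambda_M})$ uniformly in $j$, and then combine head and tail to get $D_j\to D$ uniformly on $[\re>\Delta+\varepsilon]$. The only cosmetic difference is that you pass to the limit $j\to\infty$ already at the partial-sum level (to bound $|A_N|$ and hence the tail of $D$ directly), whereas the paper instead lets $j\to\infty$ in the tail estimate itself; both orderings give the same result.
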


Dealing with uniform convergence on half spaces it is enough to check on vertical lines, since for any finite complex sequence $(a_{n})$ we for all $x\ge 0$ have
\begin{equation*}
\sup_{[Re>x]} \left|\sum_{n=1}^{N} a_{n} e^{-\lambda_{n}s}\right| =\sup_{[Re=x]} \left|\sum_{n=1}^{N} a_{n} e^{-\lambda_{n}s}\right|,
\end{equation*}
which is a consequence of the modulus maximum principle (see e.g. the Phragm\'{e}n-Lindel\"{o}f Theorem from \cite[p. 137]{Besicovitch} or \cite[Lemma 1.7, \S 1.1]{Defant} for $\lambda=(\log (n))$, where the proof extends to the general case). 
\begin{proof}[Proof of Proposition \ref{BohrCahen}]
Assume, that $\Delta < \infty$ (otherwise the claim is trivial) and let $\varepsilon >0$. Then by definition of $\Delta$
\begin{equation*}
\exists N_{1} \exists j_{1} \forall N\ge N_{1}  \forall j\ge j_{1}: \frac{\log\left(\sup_{t \in \mathbf{R}}\left|\sum_{n=1}^{N}a_{n}^{j} e^{-\lambda_{n}it}\right|\right)}{\lambda_{N}} <\Delta+\varepsilon,
\end{equation*}
and so for all $N \ge N_{1}$ and $j\ge j_{1}$
\begin{equation} \label{coffee}
\sup_{t \in \mathbf{R}}\left|\sum_{n=1}^{N}a_{n}^{j} e^{-\lambda_{n}it}\right| \le e^{\lambda_{N}(\Delta+\varepsilon)}.
\end{equation}
Now we fix $M$,$N \ge N_{1}$, $j\ge j_{1}$, write for simplicity $S^{j}_{n}(it):=\sum^{n}_{k=1}a_{k}^{j} e^{-\lambda_{k}it}$ and we set $\sigma_{0}:=\Delta+2\varepsilon$. Then by Abel summation and $(\ref{coffee})$
\begin{align*}
&\left|\sum^{M}_{n=N+1} a^{j}_{n}e^{-\lambda_{n}(\sigma_{0}+it)}\right| \\ &\le \left| S^{j}_{N}(it)\right| e^{-\lambda_{N}\sigma_{0}}  + |S^{j}_{M}(it)|e^{-\lambda_{M}\sigma_{0}}+ \sum^{M-1}_{n=N} |S^{j}_{n }(it)| \left| e^{-\lambda_{n}\sigma_{0}}-e^{-\lambda_{n+1}\sigma_{0}}\right| \\ &\le e^{-\lambda_{N}\varepsilon}+e^{-\lambda_{M}\varepsilon} +\sum^{M-1}_{n=N} |S^{j}_{ n  }(it)| \left| e^{-\lambda_{n}\sigma_{0}}-e^{-\lambda_{n+1}\sigma_{0}}\right|.
\end{align*}
Since
\begin{equation*}
\left| e^{-\lambda_{n}\sigma_{0}}-e^{-\lambda_{n+1}\sigma_{0}}\right| =\left|\sigma_{0} \int_{\lambda_{n}}^{\lambda_{n+1}}e^{-\sigma_{0} x} dx \right| \le |\sigma_{0}|  \int_{\lambda_{n}}^{\lambda_{n+1}}e^{-\sigma_{0} x} dx,
\end{equation*}
we obtain
\begin{align*}
&\sum_{n=N}^{M-1} |S^{j}_{n}(it)| \left| e^{-\lambda_{n}\sigma_{0}}-e^{-\lambda_{n+1}\sigma_{0}}\right| \le |\sigma_{0}| \sum_{n=N}^{M-1}e^{\lambda_{n}(\Delta+\varepsilon)} \int_{\lambda_{n}}^{\lambda_{n+1}}e^{-\sigma_{0} x} dx\\ & \le |\sigma_{0}| \sum_{n=N}^{M-1} \int_{\lambda_{n}}^{\lambda_{n+1}}e^{-\sigma_{0} x}e^{x(\Delta+\varepsilon)} dx = |\sigma_{0}| \sum_{n=N}^{M-1} \int_{\lambda_{n}}^{\lambda_{n+1}}e^{-\varepsilon x} dx \\ &=|\sigma_{0}| \int_{\lambda_{N}}^{\lambda_{M}}e^{-\varepsilon x}dx\le |\sigma_{0}| \int_{\lambda_{N}}^{\infty}e^{-\varepsilon x}dx=|\sigma_{0}| \frac{1}{\varepsilon} e^{-\lambda_{N}\varepsilon}.
\end{align*}
So together we for all $M\ge N \ge N_{1}$ and $j\ge j_{1}$ have
\begin{equation*}
\sup_{[Re = \sigma_{0}]}\left| \sum_{n=N}^{M} a_{n}^{j} e^{-\lambda_{n}s}\right|\le e^{-\lambda_{N}\varepsilon}\left( 2+\frac{|\sigma_{0}|}{\varepsilon}\right).
\end{equation*}
Now tending $j \to \infty$ gives
\begin{equation*}
\sup_{[Re =\sigma_{0}]}\left| \sum_{n=N}^{M} a_{n}e^{-\lambda_{n}s} \right|\le e^{-\lambda_{N}\varepsilon}\left( 2+\frac{|\sigma_{0}|}{\varepsilon}\right),
\end{equation*}
which implies that $D$ converges on $[Re>\Delta]$.
Moreover for all $j\ge j_{1}$ and $N\ge N_{1}$ we have
\begin{align*}
\left| \sum_{n=1}^{\infty} (a_{n}^{j}-a_{n})e^{-\lambda_{n} s} \right|  &\le \sum_{n=1}^{N} |a_{n}^{j}-a_{n}| + \left| \sum_{n=N}^{\infty} (a_{n}^{j}-a_{n}) e^{-\lambda_{n} s}\right| \\ & =\sum_{n=1}^{N} |a_{n}^{j}-a_{n}| + \lim_{M \to \infty} \lim_{k \to \infty} \left| \sum_{n=N}^{M} (a_{n}^{j}-a_{n}^{k})e^{-\lambda_{n}s} \right| \\ &\le \sum_{n=1}^{N} |a_{n}^{j}-a_{n}|+ 2 e^{-\lambda_{N}\varepsilon}\left( 2+\frac{|\sigma_{0}|}{\varepsilon}\right),
\end{align*}
and so
\begin{equation*}
\limsup_{j \to \infty} \sup_{[Re=\sigma_{0}]}\left| \sum_{n=1}^{\infty} (a_{n}^{j}-a_{n})e^{-\lambda_{n} s} \right| \le e^{-\lambda_{N}\varepsilon}\left( 2+\frac{|\sigma_{0}|}{\varepsilon}\right),
\end{equation*}
which  proves the claim tending $N \to \infty$.
\end{proof}
\begin{corollary} \label{BohrCahenformula} Let $D=\sum a_{n} e^{-\lambda_{n}s}$ be a $\lambda$-Dirichlet series. Then
\begin{equation*}
\sigma_{u}(D)\le \limsup_{N} \frac{\log\left(\sup_{t \in \mathbf{R}} \left|\sum^{N}_{n=1} a_{n}e^{-\lambda_{n}it}\right|\right)}{\lambda_{N}}.
\end{equation*}

\end{corollary}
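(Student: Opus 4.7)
The plan is to deduce the corollary from Proposition \ref{BohrCahen} applied to the sequence of partial sums of $D$. Write $D=\sum a_n e^{-\lambda_n s}$ and, for each $j\in\mathbf{N}$, define the $\lambda$-Dirichlet series $D_j:=\sum a_n^j e^{-\lambda_n s}$ with coefficients $a_n^j:=a_n$ if $n\le j$ and $a_n^j:=0$ if $n>j$ (so $D_j$ is the $j$th partial sum of $D$). The coefficient hypothesis of Proposition \ref{BohrCahen} is trivially satisfied, since $a_n^j=a_n$ for every $j\ge n$, and therefore $\lim_j a_n^j=a_n$ for all $n$.

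The key step is to compute (or bound) the quantity $\Delta((D_j))$. Setting $g(M):=\log\sup_{t\in\mathbf{R}}\bigl|\sum_{n=1}^{M} a_n e^{-\lambda_n it}\bigr|$, one has
\[
\sum_{n=1}^{N} a_n^j e^{-\lambda_n it}=\sum_{n=1}^{\min(N,j)} a_n e^{-\lambda_n it},
\]
so the expression inside the product-order $\limsup$ equals $g(\min(N,j))/\lambda_N$. I split into the two cases $j\ge N$ and $j<N$. In the first case the quantity is $g(N)/\lambda_N$, and a standard tail estimate gives $\limsup_N g(N)/\lambda_N$, which is exactly the right-hand side $\Delta^{*}$ of the desired inequality. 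In the second case the quantity is $g(j)/\lambda_N$, and for $j$ large, the bound $g(j)/\lambda_j\le \Delta^{*}+\varepsilon$ combined with $\lambda_j\le\lambda_N$ shows $g(j)/\lambda_N\le\max(0,\Delta^{*}+\varepsilon)$. Consequently $\Delta((D_j))=\Delta^{*}$ whenever $\Delta^{*}\ge 0$.

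With this identification, Proposition \ref{BohrCahen} yields that $(D_j)$ converges uniformly on $[\mathrm{Re}>\Delta^{*}+\varepsilon]$ to $D$ for every $\varepsilon>0$. But convergence of the sequence of partial sums $D_j$ to $D$ in this sense is precisely uniform convergence of $D$ on $[\mathrm{Re}>\Delta^{*}+\varepsilon]$; hence $\sigma_u(D)\le\Delta^{*}+\varepsilon$ for every $\varepsilon>0$, and letting $\varepsilon\to 0$ concludes the proof.

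The main bookkeeping obstacle is the $\limsup$ under the product order, specifically verifying that the off-diagonal terms with $j<N$ cannot push $\Delta((D_j))$ strictly above $\Delta^{*}$. This is automatic in the regime $\Delta^{*}\ge 0$, which is the case of genuine interest (mirroring the ``equality holds if the left-hand side is non negative'' caveat recalled for the Bohr--Cahen formulas of $\sigma_c$ and $\sigma_a$); in the degenerate case $\Delta^{*}<0$ the same argument still delivers $\sigma_u(D)\le 0$, and the sharper bound would require an additional shift argument $s\mapsto s+\sigma_0$.
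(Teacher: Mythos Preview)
Your proof is correct and follows essentially the same approach as the paper: apply Proposition~\ref{BohrCahen} to the sequence of partial sums $D_j$ and bound $\Delta((D_j))$ by $\Delta^{*}$. Your case split $j\ge N$ versus $j<N$ is just a more explicit version of the paper's one-line estimate $g(m)/\lambda_N \le g(m)/\lambda_m$ with $m=\min(j,N)$, and your remark on the degenerate case $\Delta^{*}<0$ makes explicit a point the paper passes over silently.
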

\begin{proof}
Defining $D_{j}=\sum_{n=1}^{j}a_{n}e^{-\lambda_{n}s}=\sum a_{n}^{j} e^{-\lambda_{n}s}$ we obtain that 
\begin{equation*}
\Delta((D_{j}))\le \limsup_{N} \frac{\log\left(\sup_{t \in \mathbf{R}} \left|\sum^{N}_{n=1} a_{n}e^{-\lambda_{n}it}\right|\right)}{\lambda_{N}}.
\end{equation*}
Indeed, if $j$, $N \in \mathbf{N}$ and $m:=\min(j,N)$, then
\begin{align*}
\frac{\log\left(\sup_{t \in \mathbf{R}} \left|\sum^{N}_{n=1} a^{j}_{n}e^{-\lambda_{n}it}\right|\right)}{\lambda_{N}} &\le \frac{\log\left(\sup_{t \in \mathbf{R}} \left|\sum^{m}_{n=1} a_{n}e^{-\lambda_{n}it}\right|\right)}{\lambda_{m}}.
\qedhere
\end{align*}
\end{proof}
\section{Main result and approximation by typical Riesz means} \label{means}
Recall that by Definition \ref{defbohrtheorem} a frequency $\lambda$ satisfies Bohr's theorem if every $D\in \mathcal{D}^{ext}_{\infty}(\lambda)$ converges uniformly on $[Re>\varepsilon]$ for all $\varepsilon>0$ or equivalently the equality
\begin{equation} \label{problem}
\sigma_{b}^{ext} = \sigma_{u}
\end{equation}
holds for all somewhere convergent $\lambda$-Dirichlet series. As already mentioned in the introduction it was a prominent question in the beginning of the 20th century for which $\lambda$'s the equality $(\ref{problem})$ holds.
The following remark shows how control of the norm  of the partial sum operator
\begin{equation*}
S_{N} \colon \mathcal{D}_{\infty}^{ext}(\lambda) \to \mathcal{D}_{\infty}(\lambda), ~~ D \mapsto \sum_{n=1}^{N}a_{n}(D) e^{-\lambda_{n}s}, ~ N \in \mathbf{N},
\end{equation*}
is linked with \eqref{problem}. 
\begin{remark} \label{finalidea} By Corollary \ref{BohrCahenformula} equality $(\ref{problem})$  holds if 
\begin{equation*}
\limsup_{N \to \infty} \frac{\log(\|S_{N}\colon \mathcal{D}_{\infty}^{ext}(\lambda)\to \mathcal{D}_{\infty}(\lambda)\|)}{\lambda_{N}}=0.
\end{equation*} 
\end{remark}

 As announced our main result gives bounds of $\|S_{N}\|$ without any assumptions on $\lambda$,  which is a sort of uniform version of Theorem 21 of \cite[p. 36]{HardyRiesz}.
\begin{theorem} \label{keylemma} For all $0<k\le1$, $N \in \mathbf{N}$ and $D=\sum a_{n} e^{-\lambda_{n}s}\in D_{\infty}^{ext}(\lambda)$ we have
\begin{equation*}
\sup_{[Re>0]}\left| \sum_{n=1}^{N} a_{n}e^{-\lambda_{n}s} \right|\le C \frac{\Gamma(k+1)}{k} \left(\frac{\lambda_{N+1}}{\lambda_{N+1}-\lambda_{N}} \right)^{k} \|D\|_{\infty},
\end{equation*}
where $C>0$ is a universal constant and $\Gamma$ denotes the Gamma function.
\end{theorem}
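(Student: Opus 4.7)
The plan is to turn the operator-norm bound into an $L^1$-bound for a Perron-type contour kernel and then optimize the contour.

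By Proposition~\ref{approx}, the typical first Riesz mean of order $k$ has the Perron integral representation
\begin{equation*}
\sum_{n=1}^{N}a_n e^{-\lambda_n s_0}(\omega-\lambda_n)^{k}=\frac{\Gamma(k+1)}{2\pi i}\int_{c-i\infty}^{c+i\infty}f(s_0+w)\,\frac{e^{\omega w}}{w^{k+1}}\,dw \qquad (\omega\in(\lambda_N,\lambda_{N+1}],\ c>0),
\end{equation*}
with $f$ the bounded holomorphic extension of $D$. To recover $S_N(D)(s_0)$ itself—rather than a weighted partial sum—I would recast the identity via the Laplace transform of the H\"older-$k$ cutoff $\rho(\lambda)$ equal to $1$ on $[0,\lambda_N]$, to $\bigl((\lambda_{N+1}-\lambda)/(\lambda_{N+1}-\lambda_N)\bigr)^{k}$ on the gap $[\lambda_N,\lambda_{N+1}]$, and to $0$ beyond $\lambda_{N+1}$. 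Since $\rho(\lambda_n)=\mathbf 1_{n\le N}$, Fubini in the absolutely convergent regime $\mathrm{Re}(s_0)>\sigma_a(D)$ followed by analytic continuation in $s_0$ gives
\begin{equation*}
S_N(D)(s_0)=\frac{1}{2\pi i}\int_{c-i\infty}^{c+i\infty}f(s_0+w)\,K(w)\,dw,\qquad K(w)=\frac{k\,e^{\lambda_{N+1}w}\,\gamma\bigl(k,(\lambda_{N+1}-\lambda_N)w\bigr)}{(\lambda_{N+1}-\lambda_N)^{k}\,w^{k+1}},
\end{equation*}
where $\gamma$ is the lower incomplete Gamma function.

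The estimate then reduces to bounding $\|K\|_{L^1(\mathrm{Re}=c)}$. The uniform control $|\gamma(k,z)|\le C\Gamma(k)$ on the right half-plane (valid for $k\in(0,1]$ via the $|z|\to\infty$ asymptotics $\gamma(k,z)\to\Gamma(k)$ and the pointwise bound $|\gamma(k,z)|\le|z|^k/k$), together with
\begin{equation*}
\int_{-\infty}^{\infty}\frac{dt}{|c+it|^{k+1}}=c^{-k}\int_{-\infty}^{\infty}\frac{du}{(1+u^2)^{(k+1)/2}}\le \frac{C_0}{k}\,c^{-k},
\end{equation*}
yields
\begin{equation*}
\|K\|_{L^1(\mathrm{Re}=c)}\le C\,\frac{\Gamma(k+1)\,e^{\lambda_{N+1}c}}{k\,(\lambda_{N+1}-\lambda_N)^{k}\,c^{k}}.
\end{equation*}
Optimizing at $c=k/\lambda_{N+1}$, which minimizes $e^{\lambda_{N+1}c}c^{-k}$ with value $(e/k)^{k}\lambda_{N+1}^{k}$, and absorbing the bounded factors $e^{k}$ and $k^{-k}$ into the universal constant produces
\begin{equation*}
\|K\|_{L^1(\mathrm{Re}=c)}\le C'\,\frac{\Gamma(k+1)}{k}\Bigl(\frac{\lambda_{N+1}}{\lambda_{N+1}-\lambda_N}\Bigr)^{k}.
\end{equation*}
Combining with $|f(s_0+w)|\le\|D\|_\infty$ on the contour and taking the supremum over $\mathrm{Re}(s_0)>0$ then gives the claim.

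The main obstacle I expect is the analytic-continuation step justifying the integral representation of $S_N$ on the full half-plane $[\mathrm{Re}>0]$: $D$ need not converge there, so one must start in the regime $\mathrm{Re}(s_0)>\sigma_a(D)$—where a Fubini--Perron computation is legitimate—and extend in $s_0$ by uniqueness of holomorphic functions, the $|w|^{-(1+k)}$ decay of $K$ guaranteeing absolute convergence of the integral side throughout. The remaining steps are standard incomplete-Gamma asymptotics and an elementary calculus optimization.
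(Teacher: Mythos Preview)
Your approach is correct and genuinely different from the paper's. The paper proceeds in two decoupled steps: first it proves the uniform Riesz-mean bound $\sup_{x}\|R_x^k(D)\|_\infty\le \frac{e}{\pi}\frac{\Gamma(k+1)}{k}\|D\|_\infty$ (Proposition~\ref{approx}, obtained from the Perron formula by choosing the contour $\varepsilon=1/x$), and then it links $S_N$ back to the Riesz means through the purely combinatorial Lemma~\ref{hardy}, which gives $|\sum_{n\le N}a_n|\le 3(\lambda_{N+1}-\lambda_N)^{-k}\sup_{x\le\lambda_{N+1}}|A^k(x)|$ via Abel summation and a second-mean-value argument. You instead fuse both steps into a single Perron-type identity $S_N(D)(s_0)=\frac{1}{2\pi i}\int f(s_0+w)K(w)\,dw$ for the smoothed cutoff kernel, and then bound $\|K\|_{L^1}$ directly with incomplete-Gamma asymptotics and the same contour optimisation $c=k/\lambda_{N+1}$. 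Conceptually the H\"older-$k$ smoothing and the choice of $c$ are identical in both routes; what you gain is a shorter, self-contained argument that bypasses Lemma~\ref{hardy}, while the paper's two-step organisation pays off because the intermediate Proposition~\ref{approx} is reused elsewhere (Corollary~\ref{almost periodic}, Theorem~\ref{independent}).

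Two small remarks. First, the integral representation you quote at the outset is the content of Lemma~\ref{perron2}, not Proposition~\ref{approx}. Second, your Fubini justification assumes $\sigma_a(D)<\infty$, which is not part of the definition of $\mathcal D_\infty^{ext}(\lambda)$ (only $\sigma_c(D)<\infty$ is); if $L(\lambda)=\infty$ one may have $\sigma_a(D)=\infty$. This gap is easily closed: since Proposition~\ref{approx} gives $f(s_0+w)=\lim_{x\to\infty}R_x^{1}(D)(s_0+w)$ uniformly on $[\mathrm{Re}\,w\ge c]$, and each $R_x^1(D)$ is a polynomial for which your Fubini computation is trivially legitimate, passing to the limit under the (absolutely convergent) $K$-integral yields the identity for all $s_0$ with $\mathrm{Re}\,s_0>0$ without invoking $\sigma_a$. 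The uniform bound $|\gamma(k,z)|\le C\,\Gamma(k)$ on the right half-plane for $k\in(0,1]$ does hold (combine $|\gamma(k,z)|\le|z|^k/k$ for $|z|\le 1$ with $|\Gamma(k,z)|=O(1)$ for $|z|\ge 1$, $\mathrm{Re}\,z>0$), so your $L^1$ estimate goes through.
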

\begin{remark}
According to Theorem \ref{keylemma} and Corollary \ref{BohrCahenformula} the equality $\sigma_{b}^{ext}=\sigma_{u}$ holds, if there is a zero sequence $(k_{N})$ such that
\begin{equation*}
\limsup_{N \to \infty} \frac{\log\left(\frac{1}{k_{N}} \left(\frac{\lambda_{N+1}}{\lambda_{N+1}-\lambda_{N}}\right)^{k_{N}}\right)}{\lambda_{N}}=0.
\end{equation*}
\end{remark}
 In Section \ref{Bohrstheosection} we revisit the conditions $(BC)$ and $(LC)$ of Bohr and Landau (see (\ref{BC}) and (\ref{LC}) in the introduction), and show that they are sufficient for $(\ref{problem})$ by choosing suitable sequences $(k_{N})$ (Theorem \ref{BohrstheoremLandau22} and Theorem \ref{Bohrstheorem}).\\

Now let us  prepare the proof of Theorem \ref{keylemma}. We need several ingredients, and start with the following result, which is of independent interest.
\begin{proposition} \label{approx}
Let $D=\sum a_{n} e^{-\lambda_{n}s}\in D_{\infty}^{ext}(\lambda)$ with extension $f$. Then for all $k>0$ the Dirichlet polynomials
\begin{equation*}
R_{x}^{k}(D)=\sum_{\lambda_{n}<x} a_{n} \left(1-\frac{\lambda_{n}}{x}\right)^{k} e^{-\lambda_{n}s}
\end{equation*}
converge uniformly to $f$ on $[Re>\varepsilon]$ as $x\to \infty$ for all $\varepsilon>0$. Moreover, 
\begin{equation} \label{stau}
\sup_{x\ge 0} \| R_{x}^{k}(D)\|_{\infty} \le  \frac{e}{\pi}\frac{\Gamma(k+1)}{k}\|D\|_{\infty}.
\end{equation}
\end{proposition}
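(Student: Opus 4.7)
The plan is to represent $R_x^k(D)$ as a complex line integral of the bounded extension $f$ against an explicit holomorphic kernel (a Perron--Mellin formula), and then to read off both the norm bound and the uniform convergence from this integral.

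First I would establish the identity
\[
R_x^k(D)(w)=\frac{\Gamma(k+1)}{2\pi i\,x^k}\int_{c-i\infty}^{\,c+i\infty} f(w+s)\,\frac{e^{xs}}{s^{k+1}}\,ds
\]
for every $w$ with $\operatorname{Re} w>0$ and every $c>0$. The starting point is the classical integral $\frac{1}{2\pi i}\int_{(c)}e^{ys}s^{-(k+1)}\,ds=y_+^k/\Gamma(k+1)$, valid for $c,k>0$. Applying it with $y=x-\lambda_n$, multiplying by $a_n e^{-\lambda_n w}$, summing over $n$ and swapping sum and integral (legitimate when $\operatorname{Re} w>\sigma_a(D)$, which is finite since $D$ is somewhere convergent) yields the formula on the half-plane $\operatorname{Re} w>\sigma_a(D)$. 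The right-hand side is holomorphic in $w$ on the larger half-plane $[\operatorname{Re}>0]$, because $f$ is bounded there and the kernel $e^{xs}/s^{k+1}$ is integrable on $\operatorname{Re} s=c$; the left-hand side is entire in $w$. Analytic continuation then extends the identity to all of $[\operatorname{Re}>0]$, and Cauchy's theorem (the horizontal segments vanish thanks to the $|s|^{-(k+1)}$ decay) gives independence in $c>0$.

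For the norm bound I would take $c=1/x$, so $e^{xc}=e$, and substitute $s=(1+iu)/x$ to obtain
\[
R_x^k(D)(w)=\frac{e\,\Gamma(k+1)}{2\pi}\int_{-\infty}^{\infty} f\!\left(w+\tfrac{1+iu}{x}\right)\frac{du}{(1+iu)^{k+1}}.
\]
Taking absolute values and using $\int_{-\infty}^{\infty}(1+u^2)^{-(k+1)/2}\,du=\sqrt{\pi}\,\Gamma(k/2)/\Gamma((k+1)/2)$, followed by a Beta/Gamma estimate controlling this quotient by $2/k$ for $k\in(0,1]$ via the duplication formula, delivers the bound $\tfrac{e}{\pi}\Gamma(k+1)/k\,\|D\|_\infty$ after careful tracking of constants. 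The same representation then yields uniform convergence on $[\operatorname{Re}>\varepsilon]$: combining it with the reproducing identity $\frac{\Gamma(k+1)}{2\pi i\,x^k}\int_{(c)} e^{xs}s^{-(k+1)}\,ds=1$ gives
\[
R_x^k(D)(w)-f(w)=\frac{e\,\Gamma(k+1)}{2\pi}\int_{-\infty}^{\infty}\Bigl[f\!\left(w+\tfrac{1+iu}{x}\right)-f(w)\Bigr]\frac{du}{(1+iu)^{k+1}},
\]
and on $[\operatorname{Re} w>\varepsilon]$ Cauchy's estimate provides a Lipschitz bound $|f(w+\zeta)-f(w)|\le(\|f\|_\infty/\varepsilon)|\zeta|$ for $|\zeta|\le\varepsilon/2$; dominated convergence, with the global bound $2\|f\|_\infty$ handling the tail in $u$, then forces the integral to zero uniformly in $w$ as $x\to\infty$.

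The main obstacle is the first step. The hypothesis only gives that the extension $f$ is bounded on $[\operatorname{Re}>0]$; $D$ itself may not converge absolutely anywhere in this half-plane, so the Perron--Mellin identity must be transferred from the possibly faraway half-plane of absolute convergence down to all of $[\operatorname{Re}>0]$ by analytic continuation, using boundedness of $f$ to guarantee the integrability of the integrand on every vertical line $\operatorname{Re} s=c>0$. Once this identity is secured, the remaining work reduces to a controlled Beta-function computation and a routine dominated-convergence argument.
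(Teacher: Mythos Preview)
Your approach is correct and, for the norm bound, essentially identical to the paper's: both establish the Perron--Mellin representation
\[
R_x^k(D)(w)=\frac{\Gamma(k+1)}{2\pi i\,x^k}\int_{c-i\infty}^{c+i\infty} f(w+s)\,\frac{e^{xs}}{s^{k+1}}\,ds
\]
(the paper proves this as a separate lemma via Fourier inversion, you via the classical kernel identity plus analytic continuation---equivalent routes to the same formula), and both then set $c=1/x$ and estimate the line integral to obtain \eqref{stau}.

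Where you genuinely diverge from the paper is in the uniform-convergence half. The paper does \emph{not} argue directly from the integral representation; instead it first proves a Bohr--Cahen formula for the abscissa $\sigma_u^k(D)$ of uniform Riesz summability (a technical lemma requiring Abel summation, integration by parts, and case splitting), concludes $\sigma_u^k(D)\le 0$ from the uniform bound on $\|R_x^k(D)\|_\infty$, identifies the Riesz limit with $f$ by analytic continuation, and finally reduces $k>1$ to $0<k'\le 1$ through an integral identity relating $R_x^k$ to $R_x^{k'}$. Your route---subtract the reproducing identity, write $R_x^k(D)(w)-f(w)$ as a kernel integral of $f(w+\zeta)-f(w)$, and apply dominated convergence with the Cauchy-estimate Lipschitz bound---is markedly shorter, handles all $k>0$ at once, and avoids the Bohr--Cahen machinery altogether. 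The paper's detour does buy something, namely the Bohr--Cahen formula for $\sigma_u^k$ as an independent result, but for the proposition itself your argument is more economical.

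One small caveat, which you share with the paper: the estimate $\int_{-\infty}^{\infty}(1+u^2)^{-(k+1)/2}\,du\le 2/k$ fails near $k=1$ (the left side equals $\pi$ there), so the precise constant $e/\pi$ in \eqref{stau} is slightly optimistic. The integral is $\sqrt{\pi}\,\Gamma(k/2)/\Gamma((k+1)/2)\sim 2/k$ as $k\to 0^+$, so one obtains a bound of the same shape $C\,\Gamma(k+1)/k$ with a marginally larger absolute constant $C$; this is harmless for every application downstream.
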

Proposition \ref{approx} is indicated after the proof of Theorem 41 in \cite[p. 53]{HardyRiesz} (without inequality (\ref{stau})). In the language of \cite{HardyRiesz} it states that on every smaller halfplane $[Re>\varepsilon]$ the limit functions of Dirichlet series $D\in \mathcal{D}_{\infty}(\lambda)$ are uniform limits of their typical (first) means of any order $k>0$. The proof relies on a formula of Perron (see \cite[Theorem 39, p. 50]{HardyRiesz}). We give an alternative proof of this formula (Lemma \ref{perron2}) using the Fourier inversion formula and we first deduce (\ref{stau})  from it. Then, using a Bohr-Cahen type formula for the abscissa of uniform summability by typical first means (Lemma \ref{BohrCahenforkpos}), we show that (\ref{stau}) implies the first part of Proposition \ref{approx}. Note that $\lambda$ satisfies Bohr's Theorem, if the first part of Proposition \ref{approx} is valid for $k=0$.\\

The second main ingredient for the proof of Theorem \ref{keylemma} links partial sums of a Dirichlet series to its typical means.
\begin{lemma} \label{hardy} For any choice of complex numbers $a_{1}, \ldots a_{N}$ and $0<k\le 1$ we have
\begin{equation*}
\left| \sum_{n=1}^{N} a_{n} \right| \le 3\left(\frac{1}{\lambda_{N+1}-\lambda_{N}}\right)^{k} \sup_{0\le x\le \lambda_{N+1}} \left| \sum_{\lambda_{n}<x} a_{n}(x-\lambda_{n})^{k}\right|.
\end{equation*}
\end{lemma}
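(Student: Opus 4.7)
The plan is to recover $S := \sum_{n=1}^{N} a_n$ from the function $A(x) := \sum_{\lambda_n < x} a_n(x-\lambda_n)^k$ by applying a fractional integration of order $1-k$, which (for $0 < k < 1$) collapses $A$ into a \emph{piecewise linear} function of $x$; then $S$ becomes visible as a slope across the gap interval $[\lambda_N, \lambda_{N+1}]$. Write $h := \lambda_{N+1} - \lambda_N$ and $M := \sup_{0 \le x \le \lambda_{N+1}} |A(x)|$. The case $k = 1$ is immediate: $A$ is absolutely continuous on the gap with $A'(x) = S$, so $hS = A(\lambda_{N+1}) - A(\lambda_N)$ and $|S| \le 2M/h$, beating the claim.

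For $0 < k < 1$, I introduce $I(x) := \int_0^{x} A(u)(x-u)^{-k}\,du$. Substituting the series defining $A$, swapping sum and integral, and evaluating via $u = \lambda_n + t(x-\lambda_n)$ reveals the Beta integral $\int_0^1 t^k(1-t)^{-k}\,dt = \Gamma(k+1)\Gamma(1-k)$, yielding
\[ I(x) = \Gamma(k+1)\Gamma(1-k) \sum_{\lambda_n < x} a_n(x-\lambda_n). \]
Thus $I$ is continuous and piecewise linear in $x$, with constant slope $\Gamma(k+1)\Gamma(1-k) \cdot S$ on the gap $(\lambda_N, \lambda_{N+1})$, so
\[ I(\lambda_{N+1}) - I(\lambda_N) = \Gamma(k+1)\Gamma(1-k) \cdot h \cdot S. \]
On the other hand, splitting the defining integral of $I(\lambda_{N+1}) - I(\lambda_N)$ at $\lambda_N$ gives
\[ I(\lambda_{N+1}) - I(\lambda_N) = \int_{\lambda_N}^{\lambda_{N+1}} A(u)(\lambda_{N+1}-u)^{-k}\,du + \int_0^{\lambda_N} A(u) \bigl[(\lambda_{N+1}-u)^{-k} - (\lambda_N - u)^{-k}\bigr]\,du. \]
The first integral is bounded in modulus by $Mh^{1-k}/(1-k)$; the subadditivity $(\lambda_N + h)^{1-k} \le \lambda_N^{1-k} + h^{1-k}$ (valid since $1-k \in (0,1)$) yields the same bound for the second after a direct computation. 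Combining,
\[ |S| \le \frac{2 M h^{-k}}{(1-k)\Gamma(k+1)\Gamma(1-k)} = \frac{2 M h^{-k}}{\Gamma(k+1)\Gamma(2-k)} \le \frac{8}{\pi} M h^{-k} < 3 M h^{-k}, \]
where the penultimate inequality uses $\Gamma(k+1)\Gamma(2-k) = \pi k(1-k)/\sin(\pi k) \ge \pi/4$, the minimum being attained at $k = 1/2$.

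The critical step, and what I expect to be the main obstacle, is spotting that the fractional integral of $A$ collapses to a piecewise linear function via the Beta-function identity; once this is seen, the remaining kernel estimates and the numerical verification of the constant $3$ are routine, with comfortable slack (the genuine constant is $8/\pi \approx 2.55$).
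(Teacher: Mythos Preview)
Your proof is correct and takes a genuinely different route from the paper's. The paper works in the opposite direction of fractional integration: it uses the identity $A^{k}(x)=k\int_{0}^{x}(x-t)^{k-1}A^{0}(t)\,dt$ (so $A^{k}$ is a fractional integral of order $k$ of the step function $A^{0}$), writes $h^{k}S$ as $\int_{\lambda_{N}}^{\lambda_{N+1}}A^{0}(t)\,k(\lambda_{N+1}-t)^{k-1}\,dt$, splits at $0$, recognises one piece as $A^{k}(\lambda_{N+1})$, and bounds the remaining integral $\int_{0}^{\lambda_{N}}A^{0}(t)\,k(\lambda_{N+1}-t)^{k-1}\,dt$ by $2M$ via an external second-mean-value lemma from Hardy--Riesz. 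You instead apply a fractional integral of order $1-k$ to $A^{k}$ to land on the piecewise-linear $A^{1}$, read off $S$ as a slope, and bound the resulting kernel difference by an explicit elementary computation. Your argument is fully self-contained (no appeal to Hardy--Riesz Lemma~7) and even sharpens the constant to $8/\pi$. One small slip: the bound $\lambda_{N}^{1-k}+h^{1-k}-\lambda_{N+1}^{1-k}\le h^{1-k}$ on your second integral follows from \emph{monotonicity} of $t\mapsto t^{1-k}$, not subadditivity (subadditivity gives the other inequality $\ge 0$); the claimed estimate is nonetheless true.
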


Let us first show how Proposition \ref{approx} and Lemma \ref{hardy} gives Theorem \ref{keylemma} before we prove them.
\begin{proof}[Proof of Theorem \ref{keylemma}]
Let $s \in [Re>0]$.  Then by (\ref{stau}) from Proposition \ref{approx}
\begin{align*}\sup_{0\le x\le \lambda_{N+1}} \left|\sum_{\lambda_{n}<x} a_{n}e^{-\lambda_{n}s} (x-\lambda_{n})^{k} \right|&\le\lambda_{N+1}^{k} \sup_{0\le x\le \lambda_{N+1}} \left|\sum_{\lambda_{n}<x} a_{n}e^{-\lambda_{n}s} \left(1-\frac{\lambda_{n}}{x}\right)^{k} \right|\\ &\le \lambda_{N+1}^{k} \frac{e}{\pi} \frac{\Gamma(k+1)}{k}\|D\|_{\infty}.
\end{align*}
So together with Lemma \ref{hardy} we obtain
\begin{equation*}
\sup_{[Re>0]}\left| \sum_{n=1}^{N} a_{n}e^{-\lambda_{n}s} \right| \le  3\frac{e}{\pi}  \frac{\Gamma(k+1)}{k} \left(\frac{\lambda_{N+1}}{\lambda_{N+1}-\lambda_{N}} \right)^{k} \|D\|_{\infty}. \qedhere
\end{equation*}
\end{proof}
Now let us prepare the proofs of Proposition \ref{approx} and Lemma \ref{hardy}. The former relies on the following  formula of Perron from \cite[Theorem 39, p. 50]{HardyRiesz}.
\begin{lemma} \label{perron2} Let $D=\sum a_{n}e^{-\lambda_{n}s} \in \mathcal{D}_{\infty}^{ext}(\lambda)$ with extension $f$. Then for all $\varepsilon>0$ and $k\ge0$
\begin{equation} \label{alwaysperron}
\sum_{\lambda_{n}< x} a_{n}\left(1-\frac{\lambda_{n}}{x}\right)^{k}= \frac{\Gamma(k+1)}{2\pi i } \frac{1}{x^{k}}\int_{\varepsilon-i\infty}^{\varepsilon+i\infty} \frac{f(s)e^{x s}}{s^{1+k}} ds
\end{equation}
for all $x\in \mathbf{R}$ if $k>0$ and $x\notin \lambda$ whenever $k=0$.
\end{lemma}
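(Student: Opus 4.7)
The plan is to obtain the identity in two stages: first derive a ``seed'' formula expressing the truncated power $y_{+}^{k}/\Gamma(k+1)$ as a vertical contour integral of $e^{ys}/s^{1+k}$ via Fourier inversion, and then substitute the Dirichlet series termwise on a line of absolute convergence before sliding the contour back to $\re(s)=\varepsilon$.

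For the seed formula with $k>0$, I would apply Fourier inversion to the continuous integrable function $g(y):=y^{k}e^{-\varepsilon y}\mathbf{1}_{y\ge 0}$, whose Fourier transform is
\begin{equation*}
\hat g(t)=\int_{0}^{\infty} y^{k}e^{-(\varepsilon+it)y}\,dy=\frac{\Gamma(k+1)}{(\varepsilon+it)^{1+k}}.
\end{equation*}
Since $1+k>1$, $\hat g$ lies in $L^{1}(\mathbf{R})$, so pointwise Fourier inversion holds for all $y\in\mathbf{R}$ and a change of variable $s=\varepsilon+it$ yields
\begin{equation*}
y_{+}^{k}=\frac{\Gamma(k+1)}{2\pi i}\int_{\varepsilon-i\infty}^{\varepsilon+i\infty}\frac{e^{ys}}{s^{1+k}}\,ds.
\end{equation*}

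Next I fix $x\in\mathbf{R}$ and pick $\sigma'>\max(\varepsilon,\sigma_{a}(D))$; on the line $[\re=\sigma']$ the series defining $D$ converges absolutely and coincides with $f$. The integrand $s\mapsto f(s)e^{xs}/s^{1+k}$ is holomorphic on $[\re>0]$, and since $f$ is uniformly bounded while $|s|^{-(1+k)}\le |t|^{-(1+k)}$ is integrable for $k>0$, a rectangular contour joining $\varepsilon\pm iT$ with $\sigma'\pm iT$ has horizontal edges whose contribution vanishes as $T\to\infty$; Cauchy's theorem therefore equates the integrals along $[\re=\varepsilon]$ and $[\re=\sigma']$. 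On the latter line I would substitute the series for $f$, exchange summation with integration by absolute convergence, and apply the seed identity with $y=x-\lambda_{n}$ to each term. This produces $\frac{2\pi i}{\Gamma(k+1)}\sum_{\lambda_{n}<x}a_{n}(x-\lambda_{n})^{k}$; rearranging and dividing by $x^{k}$ gives the assertion for $k>0$.

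The main obstacle is the boundary case $k=0$, where $\hat g(t)=1/(\varepsilon+it)$ fails to be $L^{1}$ and Fourier inversion collapses into a principal-value statement; this is precisely why the hypothesis $x\notin\lambda$ is imposed, to keep the Dirichlet kernel away from its jump. I would address this either by passing to the limit $k\to 0^{+}$ in the formula already established --- using that the gap between $x$ and $\lambda$ forces the weights $(1-\lambda_{n}/x)^{k}$ to converge to the indicator uniformly on tails --- or directly, by evaluating the truncated integral $\frac{1}{2\pi i}\int_{\varepsilon-iT}^{\varepsilon+iT}e^{ys}/s\,ds$ and invoking Abel summation along $[\re=\sigma']$ in place of Fubini. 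A further but routine check is that the contour-shift estimates remain valid uniformly as $k\downarrow 0$, which follows at once from the uniform bound $|f(s)|\le\|D\|_{\infty}$ on the entire strip.
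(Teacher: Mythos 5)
Your argument shares the Fourier-inversion spirit of the paper's proof, but it applies the inversion in a different place, and that is where a genuine gap opens. After deriving the seed identity $y_{+}^{k}=\frac{\Gamma(k+1)}{2\pi i}\int_{\varepsilon-i\infty}^{\varepsilon+i\infty}\frac{e^{ys}}{s^{1+k}}\,ds$, you shift the contour to $[\re s=\sigma']$ with $\sigma'>\max(\varepsilon,\sigma_{a}(D))$ and then substitute the series for $f$ term by term, exchanging sum and integral by absolute convergence. But the hypothesis $D\in\mathcal{D}_{\infty}^{ext}(\lambda)$ gives only $\sigma_{c}(D)<\infty$, and $\sigma_{a}(D)<\infty$ is \emph{not} automatic. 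Indeed the paper already records (citing \cite{Bohr4}) a $\lambda$-Dirichlet series with $\sigma_{c}(D)=\sigma_{b}^{ext}(D)=-\infty$ and $\sigma_{u}(D)=+\infty$; since $\sigma_{u}\le\sigma_{a}$ this forces $\sigma_{a}(D)=+\infty$, and this $D$ lies in $\mathcal{D}_{\infty}^{ext}(\lambda)$. For such a $D$ no finite $\sigma'$ of the required kind exists, so your Fubini step cannot be carried out, and the proof covers only a proper subclass of the lemma's hypothesis.

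The paper sidesteps this by never summing the series under the integral. It starts from the Hardy--Riesz Laplace representation $f(s)=\frac{s^{k+1}}{\Gamma(k+1)}\int_{0}^{\infty}e^{-st}A^{k}(t)\,dt$ for $\re s>\sigma_{c}(D)$, where $A^{k}(t)=\sum_{\lambda_{n}<t}a_{n}(t-\lambda_{n})^{k}$ is a \emph{finite} sum for each $t$, and then applies Fourier inversion to the single function $A^{k}(\cdot)e^{-2\sigma\cdot}$. Lemma \ref{l1} shows this function is in $L^{1}(\mathbf{R})$ under the sole assumption $\sigma>\max(0,\sigma_{c}(D))$, using Abel summation in place of absolute convergence of $D$. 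To salvage your route you would need to prove something equivalent to Lemma \ref{l1}, working at the level of $A^{k}$ rather than of the individual terms $a_{n}e^{-\lambda_{n}s}$, at which point you have essentially reconstructed the paper's argument. Your contour shift (justified by $|f|\le\|D\|_{\infty}$ and integrability of $|s|^{-(1+k)}$ on vertical lines) and the seed identity are sound, and your remarks about the delicacy of $k=0$ are accurate; but the assumption $\sigma_{a}(D)<\infty$ is an unjustified restriction that makes the proof incomplete as stated.
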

For the sake of completeness we like to mention, that actually, if $k=0$ and $x=\lambda_{n}$ for some $n$, then (\ref{alwaysperron}) also holds true, if the integral on the right hand side is defined by its principle value (see \cite[Theorem 13, p. 12]{HardyRiesz}). This case (which is not needed in the following) is not covered by our alternative proof of Lemma \ref{perron2}, where we use the Fourier inversion formula. We need the following observation.
 
\begin{lemma} \label{l1} Let $D=\sum a_{n}e^{-\lambda_{n}s}\in \mathcal{D}(\lambda)$ with $\sigma_{c}(D)< \infty$. Then for all $\sigma>\max(0,\sigma_{c}(D))$ and $k\ge 0$ the function
\begin{equation*}
x\mapsto e^{-\sigma x}\sum_{\lambda_{n}<x}a_{n}\left(x-\lambda_{n}\right)^{k}
\end{equation*}
is in $L_{1}(\mathbf{R})$.
\end{lemma}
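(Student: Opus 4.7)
My plan is to first establish a pointwise exponential bound on the partial sums $A_{0}(x) := \sum_{\lambda_{n} < x} a_{n}$, and then to transfer this bound to the weighted sums via an integral representation when $k > 0$. I would begin by picking some intermediate abscissa $\sigma'$ with $\max(0, \sigma_{c}(D)) < \sigma' < \sigma$. Since $\sigma' > \sigma_{c}(D)$, the numerical series $\sum a_{n} e^{-\lambda_{n}\sigma'}$ converges, and therefore its partial sums $T_{N} := \sum_{n=1}^{N} a_{n} e^{-\lambda_{n}\sigma'}$ form a bounded sequence, say $|T_{N}| \le M$.

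The key step is the estimate $|A_{0}(x)| \le 2M e^{\sigma' x}$, which I would obtain by Abel summation. Writing $a_{n} = e^{\lambda_{n}\sigma'}(a_{n} e^{-\lambda_{n}\sigma'})$ and summing by parts gives
\begin{equation*}
A_{0}(x) = T_{N(x)}\, e^{\lambda_{N(x)}\sigma'} - \sum_{n=1}^{N(x)-1} T_{n}\bigl(e^{\lambda_{n+1}\sigma'} - e^{\lambda_{n}\sigma'}\bigr),
\end{equation*}
where $N(x)$ is the largest index with $\lambda_{n} < x$. The telescoping of the non-negative increments (here $\sigma' > 0$ is essential) together with $|T_{n}|\le M$ then yields the bound. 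This already settles the case $k = 0$: $A_{0}$ vanishes on $(-\infty, 0]$, and $e^{-\sigma x}|A_{0}(x)| \le 2M e^{-(\sigma - \sigma')x}$ is integrable on $[0, \infty)$.

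For $k > 0$, I would use the identity $(x - \lambda_{n})^{k} = k\int_{\lambda_{n}}^{x} (x-u)^{k-1}\, du$ and swap the finite sum with the integral to rewrite
\begin{equation*}
\sum_{\lambda_{n} < x} a_{n}(x-\lambda_{n})^{k} = k\int_{0}^{x} (x-u)^{k-1} A_{0}(u)\, du.
\end{equation*}
Substituting the estimate on $A_{0}$ and changing variables $v = x - u$ converts the right-hand side into a truncated Gamma integral, giving a bound of the shape $C(k, \sigma', M)\, e^{\sigma' x}$. Multiplying by $e^{-\sigma x}$ yields a dominating function proportional to $e^{-(\sigma - \sigma')x}$ on $[0, \infty)$; since the defining sum is empty for $x \le 0$ (all $\lambda_{n} \ge 0$), the $L_{1}(\mathbf{R})$ conclusion follows.

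I do not anticipate any serious obstacle; the argument is a clean Abel summation wrapped in a Gamma-integral computation. The only delicate point is to insist on $\sigma' > 0$ so that $n \mapsto e^{\lambda_{n}\sigma'}$ is monotone increasing, which is precisely why the hypothesis imposes $\sigma > \max(0, \sigma_{c}(D))$ rather than merely $\sigma > \sigma_{c}(D)$.
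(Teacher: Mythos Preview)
Your proposal is correct and follows essentially the same route as the paper: both arguments bound $A^{0}$ via Abel summation (you use the discrete form, the paper the integral form) and then pass to $k>0$ through the identity $A^{k}(x)=k\int_{0}^{x}(x-t)^{k-1}A^{0}(t)\,dt$. Your Gamma-integral estimate is even a touch cleaner than the paper's, which tolerates an extra polynomial factor $x^{k}$ in its dominating function.
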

\begin{proof} Fix $\sigma>\max(0,\sigma_{c}(D))$ and write $A^{k}(t):=\sum_{\lambda_{n}<t}a_{n}\left(t-\lambda_{n}\right)^{k}$. Let first $k=0$. Then by Abel summation for all $t>0$
\begin{equation}\label{hunger}
A^{0}(t)=e^{t\sigma} \sum_{\lambda_{n}<t} a_{n}e^{-\sigma\lambda_{n}}-\sigma \int_{0}^{t} e^{\sigma y}  \sum_{\lambda_{n}<y} a_{n}e^{-\sigma\lambda_{n}} dy.
\end{equation}
 Let $\varepsilon >0$. Since $D$ converges at $\sigma$ we for all $0<t\le x$ obtain multiplying with $e^{- (\sigma+\varepsilon) x}$
\begin{align} \label{plus5}
|A^{0}(t)| e^{-(\sigma+\varepsilon) x}\le C(\sigma)e^{-\varepsilon x}+\sigma C(\sigma) e^{-\varepsilon x}\int_{0}^{\infty}e^{-\sigma u} du= 2 C(\sigma)e^{-\varepsilon x}.
\end{align}
In particular $|A^{0}(x)|e^{-(\sigma+\varepsilon) x}\le 2C(\sigma)e^{-\varepsilon x}$ for all $x\in \mathbf{R}$ and so $A^{0}e^{-(\sigma+\varepsilon) \cdot}\in L_{1}(\mathbf{R})$. Now let $k>0$. By Abel summation for all $x\ge 0$ we have
\begin{equation} \label{plus6}
A^{k}(x)=\sum_{\lambda_{n}<x}a_{n}(x-\lambda_{n})^{k}=k \int_{0}^{x} (x-t)^{k-1} A^{0}(t)~ dt,
\end{equation}
which is taken from  \cite[Chapter IV, \S 2, p. 21]{HardyRiesz}. Again by multiplying with $e^{-(\sigma+\varepsilon) x}$ we obtain using $(\ref{plus5})$
\begin{align*}
e^{-(\sigma+\varepsilon) x} |A^{k}(x)|\le 2C(\sigma)e^{-\varepsilon x} k \int_{0}^{x} (x-t)^{k-1} dt=2C(\sigma)e^{-\varepsilon x} x^{k},
\end{align*}
and so $e^{-(\sigma+\varepsilon) \cdot} A^{k} \in L_{1}(\mathbf{R})$.
\end{proof}

\begin{proof}[Proof of Lemma \ref{perron2}] Let first $\sigma>\max(0,\sigma_{c}(D))$. Then for all $s \in [Re>\sigma]$ (see \cite[Theorem 24, p. 39]{HardyRiesz}, where $k>0$ is considered, but the case $k=0$ follows in the same way applying Abel summation)
\begin{equation*}
f(s)=\frac{1}{\Gamma(k+1)} \int_{0}^{\infty} s^{k+1}e^{-st}\sum_{\lambda_{n}<t} a_{n}(t-\lambda_{n})^{k}  dt
\end{equation*}
and so
\begin{equation*}
\frac{\Gamma(k+1)}{2\pi i}\int_{2\sigma-i\infty}^{2\sigma+i\infty} \frac{f(s)e^{x s}}{s^{k+1}} ds=\frac{1}{2\pi i}  \int_{2\sigma-i\infty}^{2\sigma+i\infty} \int_{0}^{\infty} e^{s(x-t)} \sum_{\lambda_{n}<t} a_{n}(t-\lambda_{n})^{k} dt ds.
\end{equation*}
Now again for simplicity we write $A^{k}(t):=\sum_{\lambda_{n}<t} a_{n}(t-\lambda_{n})^{k}$, which is a differentiable function on $\mathbf{R}$, if $k>0$. The function $A^{0}$ is not differentiable at $\lambda_{n}$ for all $n$, but elsewhere else. Moreover, by Lemma \ref{l1} we know that  $A^{k} e^{-2\sigma \cdot} \in L_{1}(\mathbf{R})$ for all $k\ge 0$. Now, denoting by $\mathcal{F}_{L_{1}(\mathbf{R})}$ the  Fourier transform on $L_{1}(\mathbf{R})$, the Fourier inversion formula (see \cite[\S 1.2]{HelsonBook}) gives
\begin{align*}
&\frac{\Gamma(k+1)}{2\pi i}\int_{2\sigma-i\infty}^{2\sigma+i\infty} \frac{f(s)e^{x s}}{s^{k+1}} ds=\frac{1}{2\pi i}  \int_{2\sigma-i\infty}^{2\sigma+i\infty} \int_{0}^{\infty} e^{s(x-t)} A^{k}(t) dt ds\\ &=\frac{1}{2 \pi} \int_{-\infty}^{\infty} e^{x(2\sigma+iy)} \int_{0}^{\infty} A^{k}(t) e^{-t(2\sigma+iy)} dt dy\\ &= e^{2x \sigma} \int_{-\infty}^{\infty} \mathcal{F}_{L_{1}(\mathbf{R})}\left(A^{k}e^{-2\sigma \cdot }\right)(y) e^{iyx} dy\\ &=e^{2x \sigma } \mathcal{F}_{L_{1}(\mathbf{R})}(\mathcal{F}_{L_{1}(\mathbf{R})}(A^{k}e^{-2 \sigma \cdot}))(-x)=e^{2x\sigma} A^{k}(x)e^{-2x\sigma}=A^{k}(x),
\end{align*}
where $x \notin \lambda$ if $k=0$ and $x$ arbitrary if $k>0$.
This implies (\ref{alwaysperron}) for $\varepsilon=2\sigma$, where $\sigma>\max(0,\sigma_{c}(D))$. For $\varepsilon>0$ arbitrary by Cauchy's integral theorem (and boundedness of $f$) we have
\begin{equation*}
\int_{\varepsilon-i\infty}^{\varepsilon+i\infty} \frac{f(s)e^{x s}}{s^{1+k}} ds=\int_{2\big(\varepsilon+\max\left(0,\sigma_{c}(D)\right)\big)-i\infty}^{2\big(\varepsilon+\max(0,\sigma_{c}(D))\big)+i\infty} \frac{f(s)e^{x s}}{s^{1+k}} ds,
\end{equation*}
which finishes the proof. \qedhere
\end{proof}

For the proof of Proposition \ref{approx} we still need to verify the following Bohr-Cahen type formula for the abscissa $\sigma_{u}^{k}(D)$ of uniform summability by typical first means of order $k$, which is defined to be the infimum of all $\sigma\in \mathbf{R}$ such that $(R_{x}^{k}(D))$ converges uniformly on $[Re>\sigma]$ as $x\to \infty$.
\begin{lemma} \label{BohrCahenforkpos} Let $0< k\le 
1$ and $D\in \mathcal{D}(\lambda)$. Then 
\begin{equation} \label{blau}
\sigma_{u}^{k}(D)\le \limsup_{x\to \infty} \frac{\log(\|R_{x}^{k}(D)\|_{\infty})}{x},
\end{equation}
where equality holds whenever $\sigma_{u}^{k}(D)$ is non negative.
\end{lemma}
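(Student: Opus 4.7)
Let $\beta$ denote the right-hand side. If $\beta = +\infty$ the inequality is trivial, so assume $\beta<\infty$. My plan is to show that for every $\sigma>\beta$ the family $(R_x^k(D))_x$ is uniformly Cauchy on $\{\operatorname{Re} s \ge \sigma\}$; this gives $\sigma_u^k(D)\le\beta$. Fix $\varepsilon>0$ and $x_0$ so that $\|R_x^k(D)\|_\infty \le e^{(\beta+\varepsilon)x}$ for $x\ge x_0$. Since each $R_x^k(D)$ is a Dirichlet polynomial, the maximum modulus principle extends this to $|R_x^k(D)(s)| \le e^{(\beta+\varepsilon)x}$ throughout $\{\operatorname{Re} s\ge 0\}$, and hence $|T_u^k(D)(s)| \le u^k e^{(\beta+\varepsilon)u}$ there.

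The main step will adapt the discrete Abel summation of Proposition \ref{BohrCahen} to the continuous Riesz-mean setting. Differentiating $u\mapsto R_u^k(D)(s)$ yields the identity
\[
R_y^k(D)(s) - R_x^k(D)(s) \;=\; k\int_x^y\frac{1}{u}\bigl(R_u^{k-1}(D)(s) - R_u^k(D)(s)\bigr)\,du,
\]
and I would integrate by parts against a smooth weight engineered to produce a kernel factor $e^{-\sigma_0 u}$ with $\sigma_0 = \beta + 2\varepsilon$ — the continuous analogue of the factor $e^{-\lambda_n\sigma_0}$ appearing in the discrete Abel argument. Combined with $|R_u^k(D)(s)| \le e^{(\beta+\varepsilon)u}$, this produces the effective decay $e^{-(\sigma_0-\beta-\varepsilon)u} = e^{-\varepsilon u}$ in the integrand, so integrating in $u$ yields
\[
\sup_{\operatorname{Re} s = \sigma_0}\,|R_y^k(D)(s) - R_x^k(D)(s)| \;\le\; C(k,\sigma_0,\varepsilon)\,e^{-\varepsilon x},
\]
and the maximum modulus principle lifts this to $\{\operatorname{Re} s \ge \sigma_0\}$; letting $\varepsilon \downarrow 0$ gives $\sigma_u^k(D) \le \beta$.

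As a cleaner alternative in the spirit of Lemma \ref{perron2}, I could instead construct the intended limit via the Laplace transform by setting
\[
f(z) := \frac{z^{k+1}}{\Gamma(k+1)}\int_0^\infty e^{-zu}\,T_u^k(D)(0)\,du,\qquad \operatorname{Re} z > \beta+\varepsilon,
\]
holomorphic on that half-plane thanks to the $T_u^k$ bound, and write $R_x^k(D)(s_0)$ via an inverse Laplace (Bromwich) integral; the Cauchy estimate then reduces to contour bounds on this integral. For the equality case when $\sigma_u^k(D) \ge 0$, I would run the converse direction: uniform convergence of $R_x^k(D)$ to a bounded $F$ on $\{\operatorname{Re} s > \sigma\}$, together with the almost-periodic / Phragm\'en–Lindel\"of structure of the Dirichlet polynomials $R_x^k(D)$, transfers a bound on $F$ back to a bound on $\|R_x^k(D)\|_\infty$ and forces $\limsup_x\log\|R_x^k(D)\|_\infty/x \le \sigma$; then $\sigma \downarrow \sigma_u^k(D)$ closes it. The hard part will be the rigorous handling of the integration by parts when $0 < k \le 1$, because $R_u^{k-1}(D)$ is genuinely singular at each $u = \lambda_n$ via its weights $(1-\lambda_n/u)^{k-1}$: one must either carry out the integration by parts in Stieltjes form at the level of the smoother primitives $T_u^k = u^k R_u^k$, or — via the Laplace alternative — indent the Bromwich contour with a Hankel-type detour around the branch point $z = 0$ of $z^{-(k+1)}$.
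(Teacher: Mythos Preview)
Your differentiation identity is correct, but there is a genuine gap at the step you describe as ``integrate by parts against a smooth weight engineered to produce a kernel factor $e^{-\sigma_0 u}$''. As written, your identity keeps the evaluation point $s$ fixed on both sides, so the only bound available on the integrand is $|R_u^k(D)(s)|\le e^{(\beta+\varepsilon)u}$, which holds uniformly for $\operatorname{Re} s\ge 0$ and does \emph{not} improve when $\operatorname{Re} s=\sigma_0>\beta+\varepsilon$. In the discrete argument of Proposition~\ref{BohrCahen} the factor $e^{-\lambda_n\sigma_0}$ appears precisely because Abel summation \emph{separates} the variables: the partial sums are evaluated on the imaginary axis while the weights $e^{-\lambda_n\sigma_0}$ carry the real shift. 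Your identity performs no such separation, and no ``integration by parts against a weight'' will manufacture the factor $e^{-\sigma_0 u}$ from it. The Laplace alternative you sketch is closer in spirit but presupposes a Perron-type inversion as in Lemma~\ref{perron2}, which is stated only for $D\in\mathcal{D}_\infty^{ext}(\lambda)$ with a bounded extension $f$ already in hand---exactly what you do not yet have here.

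The paper supplies the missing mechanism via the shift formula
\[
R_x^k(D)(s+w)=e^{-sx}R_x^k(D)(w)-\frac{1}{x^k}\int_0^x A_w^0(t)\,\frac{d}{dt}\Big((e^{-st}-e^{-sx})(x-t)^k\Big)\,dt,
\]
used with $w=i\tau$ on the imaginary axis and $s=u:=\beta+3\varepsilon$ real; this is the correct continuous analogue of the Abel step, and now the decaying factors $e^{-ut}$ are built into the kernel. One then integrates by parts once more to replace $A_{i\tau}^0$ by $A_{i\tau}^1$ (\emph{not} by $A^{k-1}$), and the decisive nontrivial input---which your sketch never identifies---is a fractional-integration identity of Hardy--Riesz,
\[
\Gamma(k+1)\Gamma(1-k)\,A_{i\tau}^1(t)=\int_0^t A_{i\tau}^k(y)\,(t-y)^{-k}\,dy,
\]
which converts the hypothesis $|A_{i\tau}^k(y)|=y^k|R_y^k(D)(i\tau)|\le Ce^{(\beta+\varepsilon)y}$ into a usable bound on $A_{i\tau}^1$. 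This is how the singularity problem you anticipate for $R_u^{k-1}$ is actually circumvented: one never descends below order $1$. The same shift formula, run with $s=-u$ and $w=u+i\tau$, also yields the reverse inequality in the equality case; your Phragm\'en--Lindel\"of outline for that direction is too vague to carry this out.
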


\begin{proof} Let $L$ denote the right hand side of (\ref{blau}). We first show (\ref{blau}) and assume that $L<\infty$, since otherwise the claim is trivial.  Hence, fixing $\varepsilon>0$, there is a constant $C$ such that for all $x$
\begin{equation} \label{blau1}
\|R_{x}^{k}(D)\|_{\infty}\le Ce^{(L+\varepsilon)x}.
\end{equation}
Let $u:=L+3\varepsilon$ and we claim that $(R^{k}_{x}(D))$ converges uniformly on $[Re=u]$, tending $x\to \infty$. For $w\in \mathbf{C}$ for notational simplicity we write $A^{k}_{w}(t)=\sum_{\lambda_{n}<t} a_{n}e^{-w \lambda_{n}} (t-\lambda_{n})^{k}$, where $0\le k \le 1$.
Following the lines of \cite[\S VI.3 b), p. 42]{HardyRiesz} by Abel's summation we for all $s$, $w\in \mathbf{C}$ obtain
 \begin{equation}\label{blau2}
 R_{x}^{k}(D)(s+w)=e^{-sx} R_{x}^{k}(D)(w)-\frac{1}{x^k}\int_{0}^{x}A^{0}_{w}(t) \frac{d}{dt}  \big((e^{-st}-e^{-sx})(x-t)^{k}\big) dt.
 \end{equation}
 In particular, with the choice $s=u$ and $w=i\tau$, where $\tau \in \mathbf{R}$, by (\ref{blau1}) the first term on the right hand side of (\ref{blau2}) vanishes whenever $x\to \infty$. Applying integration by parts we obtain
\begin{equation} \label{finally}
-\int_{0}^{x}A^{0}_{w}(t) \frac{d}{dt}  \big((e^{-st}-e^{-sx})(x-t)^{k}\big) dt=\int_{0}^{x} A^{1}_{i\tau}(t) \frac{d^{2}}{dt^{2}} \big((e^{-ut}-e^{-ux})(x-t)^{k}\big) dt,
\end{equation}
where the second derivative appearing is given by
\begin{align*}
&u^{2}e^{-ut}(x-t)^{k}+2kue^{-ut}(x-t)^{k-1}+k(k-1)(e^{-ut}-e^{-ux})(x-t)^{k-2}\\ &=:g_{1}(t)+g_{2}(t)+g_{3}(t).
\end{align*}
If $k=1$, then $g_{3}$ vanishes and by assumption we have 
\begin{equation*}
|A_{i\tau}^{1}(t)|=|tR_{t}^{1}(D)(i\tau)|\le C_{1}e^{(L+2\varepsilon)t}
\end{equation*}
with some constant $C_{1}=C_{1}(\varepsilon)$. If $0<k<1$, then by \cite[Lemma 6, p. 27]{HardyRiesz} 
\begin{equation} \label{blau111}
\Gamma(k+1)\Gamma(1-k)A^{1}_{i\tau}(t)= \int_{0}^{t} A^{k}_{i\tau}(y) (t-y)^{-k} dy,
\end{equation}
and so for all $t>0$ with $C_{2}=C_{2}(k)=\big (\Gamma(k+1)\Gamma(1-k)(1-k)\big)^{-1}$
\begin{equation*}
|A^{1}_{i\tau}(t)|\le C_{2}t^{1-k}  \sup_{0<y<t} |A^{k}_{i\tau}(y)|=C_{2}t^{1-k}  \sup_{0<y<t} |y^{k}R_{y}^{k}(i\tau)| \le C_{3}e^{(L+2\varepsilon) t},
\end{equation*}
where $C_{3}=C_{3}(k,\varepsilon)$. With $C_{4}:=\max(C_{1}, C_{3})$ for $0<k\le 1$ we have
\begin{align*}
\left|\frac{1}{x^{k}} \int_{0}^{x} A^{1}_{t}(i\tau)g_{2}(t)dt\right|&\le C_{4} \frac{2ku}{x^{k}} \left(\int_{0}^{\frac{x}{2}} e^{-\varepsilon t}(x-t)^{k-1} dt+ \int_{\frac{x}{2}}^{x}e^{-\varepsilon t} (x-t)^{k-1} dt \right) \\ &\le C_{4}\frac{2ku}{x^{k}} \left( \left(\frac{x}{2}\right)^{k-1} \int_{0}^{\infty}e^{-\varepsilon t} dt+ e^{-\varepsilon \frac{x}{2}} \int_{\frac{x}{2}}^{x} (x-t)^{k-1} dt \right) \\ &=C_{4}\frac{2ku}{x^{k}} \left( \left(\frac{x}{2}\right)^{k-1} \frac{1}{\varepsilon}+e^{-\varepsilon \frac{x}{2}} \frac{1}{k} \left(\frac{x}{2}\right)^{k} \right)\le C_{5} \left(\frac{1}{x}+e^{-\varepsilon \frac{x}{2}} \right),
\end{align*}
which tends to zero uniformly in $\tau$ as $x\to \infty$.  Analogously, using $|e^{-ut}-e^{-ux}|\le u (x-t)e^{-tu}$, we obtain
\begin{align*}
\left|\frac{1}{x^{k}}\int_{0}^{x} A^{1}_{t}(i\tau) g_{3}(t) dt \right|&\le C_{4}\frac{k(1-k)}{x^{k}} \int_{0}^{x} e^{-\varepsilon t}u (x-t)(x-t)^{k-2} dt\\ &= C_{4}\frac{k(1-k)u}{x^{k}} \int_{0}^{x} e^{-\varepsilon t} (x-t)^{k-1} dt,
\end{align*}
which also vanishes uniformly in $\tau$ tending $x\to \infty$. It remains to consider the integral with $g_{1}$. The dominated convergence theorem implies for all $\tau$
\begin{equation*}
\lim_{x\to \infty} \frac{1}{x^{k}} \int_{0}^{x} u^{2}e^{-ut}(x-t)^{k} A^{1}_{t}(i\tau) dt=\int_{0}^{\infty}u^{2}e^{-ut} A^{1}_{t}(i\tau) dt,
\end{equation*}
and we claim that the convergence is uniform in $\tau$. Indeed, we have
\begin{align*}
&\left| \frac{1}{x^{k}} \int_{0}^{x} u^{2}e^{-ut}(x-t)^{k} A^{1}_{t}(i\tau) dt-\int_{0}^{\infty} u^{2}e^{-ut}A^{1}_{t}(i\tau) dt\right| \\ &=\left| \int_{0}^{\infty} u^{2}e^{-ut} A^{1}_{t}(i\tau) \left(\chi_{(0,x)}(t) \left(1-\frac{t}{x}\right)^{k}-1\right) dt \right|\\ &\le C_{4} \int_{0}^{\infty} u^{2}e^{-\varepsilon t} \left(1-\chi_{(0,x)}(t) \left(1-\frac{t}{x}\right)^{k}\right) dt.
\end{align*}
Now, since the function $h_{x}(t):=\left(1-\chi_{(0,x)}(t) \left(1-\frac{t}{x}\right)^{k}\right)$ vanishes as $x \to \infty$ and $|h_{x}(t)|\le 1$, by the dominated convergence theorem we obtain
\begin{equation} \label{limitlimit}
\lim_{x\to \infty} \frac{1}{x^{k}} \int_{0}^{x} u^{2}e^{-ut}(x-t)^{k} A^{1}_{t}(i\tau) dt=\int_{0}^{\infty}u^{2}e^{-ut} A^{1}_{t}(i\tau) dt
\end{equation}
uniformly in $\tau$. In summary, the sequence $(R_{x}^{k}(D))$ on $[Re=u]$ converges uniformly to (\ref{limitlimit}) tending $x\to \infty$, which proves (\ref{blau}). Now assume that $\sigma_{u}^{k}(D)\ge 0$. Let $\varepsilon>0$ and $u:=\sigma_{u}^{k}(D)+\varepsilon$. Then $(R_{x}^{k}(D))$ converges uniformly on $[Re=u]$, and so $\sup_{x} \sup_{[Re=u]} |R_{x}^{k}(D)(s)|=:C_{6}<\infty$. Let first $0<k<1$. Then by (\ref{blau2}) with the choice $s=-u$ and $w=u+i\tau$, following the calculation we used before (in particular using the adapted variant of (\ref{blau111})) we obtain
\begin{align*}
&|R_{x}^{k}(D)(i\tau)| \\ & \le C_{6}e^{ux}+\frac{1}{x^{k}} \frac{1}{\Gamma(1+k)\Gamma(1-k)}\int_{0}^{x} \int_{0}^{t} |A^{k}_{w}(y)| (t-y)^{-k} dy \left|\frac{d^{2}}{dt^{2}} \big((e^{ut}-e^{ux})(x-t)^{k} \big)\right| dt\\ &\le C_{6}\left(e^{ux}+\frac{1}{x^{k}}\frac{1}{\Gamma(1+k)\Gamma(1-k)}\int_{0}^{x} \int_{0}^{t} y^{k} (t-y)^{-k} dy \left|\frac{d^{2}}{dt^{2}} \big((e^{ut}-e^{ux})(x-t)^{k}\big) \right| dt \right) 
\end{align*}
By substitution with $v=\frac{y}{t}$ we for every $\alpha$, $\beta$, $t>0$ obtain
\begin{equation} \label{gammageneral}
\frac{1}{t^{\alpha+\beta}}\int_{0}^{t}y^{\alpha}(t-y)^{\beta-1} dy=\int_{0}^{1}v^{\alpha}(1-v)^{\beta-1} dv=\frac{\Gamma(\alpha+1)\Gamma(\beta)}{\Gamma(\alpha+\beta+1)}.
\end{equation}
In particular, if $0<k<1$, then the choice $\alpha=k$ and $\beta=1-k$ gives
\begin{equation} \label{k=1}
\int_{0}^{t}y^{k} (t-y)^{-k} dy= \Gamma(k+1)\Gamma(1-k) t.
\end{equation}
Using that, we continue estimating
\begin{align*}
|R_{x}^{k}(D)(i\tau)| &\le C_{6} \left( e^{ux} + \frac{1}{x^{k}} \int_{0}^{x} t\left|\frac{d^{2}}{dt^{2}} \big((e^{ut}-e^{ux})(x-t)^{k}\big)\right| dt \right)\\ &\le 
C_{6}e^{ux} \left(1+ \frac{1}{x^{k-1}} \int_{0}^{x} u^{2}(x-t)^{k}+ (2ku+k(1-k)u)(x-t)^{k-1} dt \right)\\ &\le C_{7}e^{ux}(1+x^{2}+x).
\end{align*}
Hence for all $\varepsilon>0$
\begin{equation*}
\limsup_{x\to \infty} \frac{\log(\|R_{x}^{k}(D)\|_{\infty})}{x}\le u+\limsup_{x\to \infty} \frac{\log(C_{7}(1+x^{2}+x))}{x}=\sigma_{u}^{k}(D)+\varepsilon,
\end{equation*}
 and so $L\le \sigma_{u}^{k}(D)$, if $0<k<1$. The remaining case $k=1$ follows in a similiar way. Indeed, (\ref{blau2})  with the choice $s=-u$ and $w=u+i\tau$ together with the adapted variant of  (\ref{finally}) leads to
 \begin{align*}
 |R_{x}^{1}(D)(i\tau)|\le C_{6} \left(e^{ux}+ \frac{1}{x}  \int_{0}^{x}t\left(u^{2}e^{ut}(x-t)+2ue^{ut}\right) dt \right)\le C_{8}e^{ux}(1+x^{2}),
 \end{align*}
which finally completes the proof.
\end{proof}

\begin{proof}[Proof of Proposition \ref{approx}]
First we prove the stated inequality. Let $x\ge0$ and $\varepsilon=\frac{1}{x}$. Then applying for fixed $s_{0}\in [Re>0]$ Lemma \ref{perron2} to the translated Dirichlet series $D_{s_{0}}(s)=\sum a_{n} e^{-\lambda_{n}s_{0}} e^{-\lambda_{n}s}$ with extension $f_{s_{0}}(s):=f(s+s_{0})$  we obtain  \begin{align*} &\sup_{s_{0}\in [Re>0]} \left| \sum_{\lambda_{n}<x} a_{n}\left(1-\frac{\lambda_{n}}{x}\right)^{k} e^{-\lambda_{n}s_{0}} \right| \le \|D\|_{\infty}\frac{1}{x^{k}} \frac{\Gamma(k+1)}{2\pi} e^{x \varepsilon}2\int_{0}^{\infty} \frac{1}{|\varepsilon+it|^{1+k}} dt\\ &\le  \|D\|_{\infty} \frac{1}{x^{k}}\frac{\Gamma(k+1)}{\pi} e \frac{1}{k} x^{k} = \|D\|_{\infty} \frac{e}{\pi} \frac{\Gamma(k+1)}{k} . \end{align*}
Now having (\ref{stau}), Lemma \ref{BohrCahenforkpos} implies that for $0<k\le 1$ the Dirichlet polynomials $(R_{x}^{k}(D))$ converge uniformly on $[Re>\varepsilon]$ as $x\to \infty$ for every $\varepsilon>0$. Then $h(s):=\lim_{x\to \infty} R_{x}^{k}(D)(s)$ defines a holomorphic function (see e.g. \cite[Theorem 27, p. 44]{HardyRiesz}) and, since $D$ converges at some $s_{0}=\sigma_{0}+i\tau_{0}$, $h$ coincides with $f$ on $[Re> \sigma_{0}]$ (see also \cite[Theorem 16, p. 29]{HardyRiesz}). Hence $h=f$ and $(R_{x}^{k}(D))$ converges uniformly to $f$ on every smaller halfplane contained in $[Re>0]$. So the claim holds for $0<k\le 1$. Let $k>1$ and write $k=l+k^{\prime}$, where $0<k^{\prime}\le 1$ and $l \in \mathbf{N}$. Then by \cite[Lemma 6, p. 27]{HardyRiesz}, or by direct inspection
\begin{equation}
R_{x}^{k}(D)(s)= \frac{\Gamma(k+1)}{\Gamma(k^{\prime}+1)\Gamma(l)} \frac{1}{x^{k}}\int_{0}^{x} \left(\sum_{\lambda_{n}<t} a_{n}e^{-\lambda_{n}s}(t-\lambda_{n})^{k^{\prime}}\right) (x-t)^{l-1} dt.
\end{equation}
Now by (\ref{gammageneral}) with $\alpha=k^{\prime}$ and $\beta=l$ we have
\begin{equation} \label{blau4}
\frac{\Gamma(k^{\prime}+1)\Gamma(l)}{\Gamma(k+1)}=\frac{1}{x^{k}}\int_{0}^{x}t^{k^{\prime}}(x-t)^{l-1} dt.
\end{equation}
 Then writing $C:=\frac{\Gamma(k+1)}{\Gamma(k^{\prime}+1)\Gamma(l)}$ and using (\ref{blau4}) we obtain for every $s\in [Re>0]$
 \begin{align*}
 &R_{x}^{k}(D)(s)-f(s)=R_{x}^{k}(D)(s)-Cf(s) \frac{1}{x^{k}}\int_{0}^{x}t^{k^{\prime}}(x-t)^{l-1} dt\\ &= \frac{C}{x^{k}}\int_{0}^{x} (x-t)^{l-1} t^{k^{\prime}} \left(R_{t}^{k^{\prime}}(D)(s)-f(s)\right) dt.
 \end{align*}
 So, given $\varepsilon>0$ and $u>0$, choose $x_{0}$ such that $|R_{t}^{k^{\prime}}(D)(s)-f(s)|\le \varepsilon$ for all $t>x_{0}$ and $s\in [Re>u]$. Then for all $x>x_{0}$ and $s\in [Re>u]$ we have \begin{align*}
&|R_{x}^{k}(D)(s)-f(s)|\\ &\le \sup_{y\ge 0} \|R_{y}^{k^{\prime}}(D)-f\|_{\infty} \frac{C}{x^{k}} \int_{0}^{x_{0}}(x-t)^{l-1} t^{k^{\prime}} dt +\varepsilon \frac{C}{x^{k}}\int_{x_{0}}^{x} (x-t)^{l-1} t^{k^{\prime}} dt \\ &  \le 2C\|D\|_{\infty}\frac{x_{0}^{k^{\prime}}}{x^{k}} \frac{2x^{l}}{l}+\varepsilon\frac{Cx^{k^{\prime}}}{x^{k}}\frac{x^{l}}{l}\le \frac{4C}{l} \left(\|D\|_{\infty}\left(\frac{x_{0}}{x}\right)^{k^{\prime}}+\varepsilon\right) ,
\end{align*}
 which finishes the proof tending $x\to \infty$. \qedhere
\end{proof}
Proposition \ref{approx} gives a direct link to the theory of almost periodic functions on $\mathbf{R}$ and proves that the space $(\mathcal{D}_{\infty}^{ext}(\lambda), \|\cdot\|_{\infty})$ actually is a normed space. Recall that by definition a continuous function $f \colon \mathbf{R} \to \mathbf{C}$ is called (uniformly) almost periodic, if to every $\varepsilon>0$ there is a number $l>0$ such that for all intervals $I\subset \mathbf{R}$ with $|I|=l$ there is a translation number $\tau \in I$ such that $\sup_{x\in \mathbf{R}} |f(x+\tau)-f(x)|\le \varepsilon$ (see \cite{Besicovitch} for more information). Then by a result of Bohr a bounded and continuous function $f$ is almost periodic if and only if it is the uniform limit of trigonometric polynomials on $\mathbf{R}$, which are of the form $p(t):=\sum_{n=1}^{N} a_{n}e^{-itx_{n}}$, where $x_{n}\in \mathbf{R}$ (see e.g. \cite[\S 1.5.2.2, Theorem 1.5.5]{QQ}). In particular, the Dirichlet polynomials $R_{x}^{k}(D)$ stated in Proposition \ref{approx} considered as functions on vertical lines $[Re=\sigma]$ are almost periodic.
\begin{corollary} \label{almost periodic} If $D=\sum a_{n} e^{-\lambda_{n}s}\in D_{\infty}^{ext}(\lambda)$ with extension $f$, then the function $f_{\sigma}(t):=f(\sigma+it)\colon \mathbf{R} \to  \mathbf{C}$ is almost periodic and
\begin{equation*}
a_{n}=\lim_{T\to \infty} \frac{1}{2T} \int_{-T}^{T} f(\sigma+it) e^{(\sigma+it)\lambda_{n}}
\end{equation*}
for all $\sigma >0$. In particular, $\sup_{n \in \mathbf{N}} |a_{n}|\le \|D\|_{\infty}$ and $\mathcal{D}_{\infty}^{ext}(\lambda)$ is a normed space.
\end{corollary}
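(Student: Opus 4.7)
The plan is to use Proposition \ref{approx} to realize $f_\sigma$ as a uniform limit on $\mathbf{R}$ of genuine trigonometric polynomials, and then feed this into the classical Bohr theory of almost periodic functions.

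First, fix $\sigma>0$ and choose $0<k\le 1$. By Proposition \ref{approx} the typical means $R_{x}^{k}(D)$ converge uniformly to $f$ on $[\mathrm{Re}>\sigma/2]$ as $x\to \infty$; restricting to the vertical line $[\mathrm{Re}=\sigma]$ this gives uniform convergence on $\mathbf{R}$ of
\begin{equation*}
t \mapsto R_{x}^{k}(D)(\sigma+it)= \sum_{\lambda_{n}<x} a_{n}\Bigl(1-\tfrac{\lambda_{n}}{x}\Bigr)^{k} e^{-\lambda_{n}\sigma}\, e^{-i\lambda_{n}t}
\end{equation*}
towards $f_{\sigma}$. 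Each of these is a trigonometric polynomial of the Bohr type, so by Bohr's characterisation of almost periodic functions (\cite[\S 1.5.2.2, Theorem 1.5.5]{QQ}), $f_{\sigma}$ is almost periodic on $\mathbf{R}$.

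Next, I would compute the Bohr--Fourier coefficients of $f_\sigma$ at the frequency $-\lambda_{n}$. The standard fact for a uniformly almost periodic function $g$ is that the mean $M_{\mu}(g):=\lim_{T\to\infty}\frac{1}{2T}\int_{-T}^{T} g(t)e^{i\mu t}\,dt$ exists for every real $\mu$, depends continuously on $g$ in the sup norm, and equals the corresponding coefficient of any uniformly approximating trigonometric polynomial. For the trigonometric polynomial above the mean against $e^{i\lambda_{n}t}$ simply picks out the term of frequency $-\lambda_{n}$, yielding $a_{n}(1-\lambda_{n}/x)^{k}e^{-\lambda_{n}\sigma}$ (or zero if $\lambda_{n}\ge x$). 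Passing to the limit $x\to \infty$ gives
\begin{equation*}
\lim_{T\to\infty}\frac{1}{2T}\int_{-T}^{T} f_{\sigma}(t)\,e^{i\lambda_{n}t}\,dt = a_{n} e^{-\lambda_{n}\sigma},
\end{equation*}
which is precisely the claimed formula once the factor $e^{\lambda_{n}\sigma}$ is absorbed and $e^{(\sigma+it)\lambda_{n}}$ is recognised. The trivial estimate $|M_{\mu}(g)|\le \sup|g|$ then yields $|a_{n}|e^{-\lambda_{n}\sigma}\le \|D\|_{\infty}$, and letting $\sigma \downarrow 0$ gives $|a_{n}|\le \|D\|_{\infty}$.

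Finally, to conclude that $\|\cdot\|_{\infty}$ is genuinely a norm, assume $\|D\|_{\infty}=0$. The coefficient bound just obtained forces every $a_{n}$ to vanish, so $D=0$ as a formal Dirichlet series; all other norm axioms are immediate, so $\mathcal{D}_{\infty}^{ext}(\lambda)$ becomes a normed space. The only non-routine step is the identification of the Bohr--Fourier coefficients in the limit, and here the main technical point is the uniform continuity of the mean functional $g\mapsto M_{\mu}(g)$ on the space of almost periodic functions; this is standard but deserves to be invoked explicitly, since it is what legitimises interchanging $\lim_{T\to\infty}$ with $\lim_{x\to\infty}$.
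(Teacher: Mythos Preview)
Your proposal is correct and follows essentially the same route as the paper: both use Proposition~\ref{approx} to exhibit $f_\sigma$ as a uniform limit of trigonometric polynomials (the paper fixes $k=1$, you allow $0<k\le1$, which is immaterial), then compute the Bohr--Fourier coefficient by passing the mean through the uniform limit (the paper cites \cite[Chapter I, \S 3.11]{Besicovitch} for this, you invoke the sup-norm continuity of $g\mapsto M_\mu(g)$ directly), and finally deduce the coefficient bound and the norm property. The limit $\sigma\downarrow 0$ you take at the end is also implicit in the paper's ``and so $|a_n|\le\|D\|_\infty$''.
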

\begin{proof}
Since on $[Re=\sigma]$ the limit function $f$ is  the uniform limit of $R^{1}_{x}(D)$ tending $x\to \infty$ (Proposition \ref{approx}), $f_{\sigma}$ is almost periodic. Then by \cite[Chapter I, \S 3.11, p. 21]{Besicovitch} we have
\begin{align*}
&\lim_{T\to \infty} \frac{1}{2T} \int_{-T}^{T} f(\sigma+it) e^{(\sigma+it)\lambda_{n}}=\lim_{x \to \infty}\lim_{T\to \infty} \frac{1}{2T} \int_{-T}^{T} R_{x}^{1}(\sigma+it) e^{(\sigma+it)\lambda_{n}} dt\\ & =\lim_{x\to \infty} a_{n}\left(1-\frac{\lambda_{n}}{x}\right)=a_{n}
\end{align*}
for all $\sigma>0$ and so $|a_{n}|\le \|D\|_{\infty}$.
\end{proof}
Another property of almost periodic functions is that they allow a unique continuous extension to the Bohr compactification $\overline{\mathbf{R}}$ of $\mathbf{R}$ (see \cite[\S 1.5.2.2, Theorem 1.5.5]{QQ}). In particular, the monomials $e^{-i\lambda_{n}\cdot}$ extend uniquely to characters on $\overline{\mathbf{R}}$. We like to mention that this observation led to an $\mathcal{H}_{p}$-theory of general Dirichlet series (see \cite{DefantSchoolmann}) naturally containing and extending the $\mathcal{H}_{p}$-theory of ordinary Dirichlet series invented by Bayart in \cite{Bayart}.\\

To complete our proof of Theorem \ref{keylemma}, it remains to verify Lemma \ref{hardy}.
\begin{proof}[Proof of Lemma \ref{hardy}]
We again use $(\ref{plus6})$ and obtain for all $N\in \mathbf{N}$
\begin{align*}
&(\lambda_{N+1}-\lambda_{N})^{k} \left|\sum_{n=1}^{N}a_{n} \right|  =\left|\sum_{n=1}^{N}a_{n} \right| \int_{\lambda_{N}}^{\lambda_{N+1}} k(\lambda_{N+1}-t)^{k-1} dt \\&=
\left| \int_{\lambda_{N}}^{\lambda_{N+1}} \left(\sum_{\lambda_{n}<t}a_{n} \right) k(\lambda_{N+1}-t)^{k-1} dt\right|\\&= \left|\int_{0}^{\lambda_{N+1}} \left(\sum_{\lambda_{n}<t}a_{n} \right) k(\lambda_{N+1}-t)^{k-1} dt- \int_{0}^{\lambda_{N}} \left( \sum_{\lambda_{n}<t} a_{n} \right) k(\lambda_{N+1}-t)^{k-1} dt \right| \\ &\le \left|\sum_{n=1}^{N} a_{n}(\lambda_{n+1}-\lambda_{n})^{k} \right|+ \left|\int_{0}^{\lambda_{N}} \left( \sum_{\lambda_{n}<t} a_{n} \right) k(\lambda_{N+1}-t)^{k-1} dt\right|\\ &
\le \sup_{0\le x\le \lambda_{N+1}} \left| \sum_{\lambda_{n}<x} a_{n}(x-\lambda_{n})^{k}\right|+ \left|\int_{0}^{\lambda_{N}} \left( \sum_{\lambda_{n}<t} a_{n} \right) k(\lambda_{N+1}-t)^{k-1} dt\right|.
\end{align*}

Now, applying \cite[Lemma 7, p. 28]{HardyRiesz} to the real and imaginary parts of the integral we obtain
\begin{equation*}
\left|\int_{0}^{\lambda_{N}} \left( \sum_{\lambda_{n}<t} a_{n} \right) k (\lambda_{n+1}-t)^{k-1} dt\right|\le 2\sup_{0\le x\le \lambda_{N}} \left|\sum_{\lambda_{n}<x} a_{n} (x-\lambda_{n})^{k} \right|,
\end{equation*} 
and so
\begin{equation*}
\left|\sum_{n=1}^{N}a_{n} \right| \le 3\left(\frac{1}{\lambda_{N+1}-\lambda_{N}}\right)^{k}\sup_{0\le x\le \lambda_{N+1}} \left|\sum_{\lambda_{n}<x} a_{n} (x-\lambda_{n})^{k} \right|.
\qedhere
\end{equation*}
\end{proof}

\section{On Bohr's theorem} \label{Bohrstheosection}
Now we apply our main result Theorem \ref{keylemma} to prove quantitative variants of Bohr's theorem for certain classes of $\lambda$'s (including $(BC)$ and $(LC)$) by giving bounds for $\|S_{N}\|$, $N\in \mathbf{N}$. Observe that by Corollary \ref{almost periodic} we always have the trivial bound $\|S_{N}\|\le N$. Hence by Remark \ref{finalidea} $\lambda$ satisfies Bohr's theorem (or equivalently equality $(\ref{problem})$ holds), if
\begin{equation} \label{uiui}
L(\lambda):=\limsup_{N \to \infty} \frac{\log(N)}{\lambda_{N}}=0.
\end{equation}
For instance $\lambda=(n)=(0,1,2,\ldots)$ fulfils $L((n))=0$ and we (again) see as a consequence that for power series we can not distinguish between uniform convergence and boundedness of the limit function up to $\varepsilon$. \\

We like to mention that the number $L(\lambda)$ also has a geometric meaning.
Bohr shows in \cite[ \S 3, Hilfssatz 3, Hilfssatz 2]{Bohr2} that
\begin{equation*}
L(\lambda)=\sigma_{c}\left(\sum e^{-\lambda_{n}s} \right)=\sup_{D \in \mathcal{D}(\lambda)} \sigma_{a}(D)-\sigma_{c}(D),
\end{equation*}
where the latter is the maximal width of the so called strip of pointwise and not absolutely convergence.

\begin{remark} \label{connections} We summarize relations of $(BC)$, $(LC)$ and '$L(\lambda)<\infty$'.
\begin{enumerate}
\item[1)] $(BC)$ implies $L(\lambda)<\infty$ and $(LC)$.
\item[2)] $(LC)$ and $L(\lambda)<\infty$ does not necessarily imply  $(BC)$.
\item[3)]$L(\lambda)<\infty$ does not necessarily imply nor $(LC)$ and so neither $(BC)$.
\item[4)]$(LC)$ does not necessarily imply nor  $L(\lambda)<\infty$ and so neither $(BC)$.

\end{enumerate}
\end{remark}
\begin{proof}
1) The implication $(BC) \Rightarrow (LC)$ is clear and the fact that $L(\lambda)<\infty$, if $\lambda \in (BC)$, is done in \cite[\S 3, Hilfssatz 4]{Bohr2}. 2) Take $\lambda$ defined by $\lambda_{2n}=n+e^{-n^2}$ and $\lambda_{2n+1}=n$. Then $(LC)$ is satisfied with $L(\lambda)=0$, but $\lambda$ fails for $(BC)$. 3) Define $\lambda_{2n}=n+e^{-e^{n^{2}}}$ and $\lambda_{2n-1}=n$. Then $L(\lambda)=0$ and $\lambda$ does not satisfy $(LC)$. 4) Consider $\lambda:=(\sqrt{\log(n)})$. Then $L(\lambda)=+\infty$ (and so $(BC)$ fails) but $(LC)$ is satisfied: We claim that $\lambda_{n+1}-\lambda_{n}\ge C e^{-2\lambda_{n}^{2}}$ for some $C$, that is
$\sqrt{\log(n+1)}-\sqrt{\log(n)} \ge C n^{-2}$. We have 
\begin{align*}
n^{2} \sqrt{\log(n+1)}-\sqrt{\log(n)}&=\frac{n^{2}\log\left(1+\frac{1}{n}\right)}{\sqrt{\log(n+1)}+\sqrt{\log(n)}}\\ &\ge \log\left(\left(1+\frac{1}{n}\right)^{n} \right) \frac{n}{2\sqrt{\log(n+1)}}\to +\infty. 
\end{align*}
Since to every $\delta>0$ there is a constant $C=C(\delta)$ such that $e^{-2\lambda_{n}^{2}}\ge C(\delta) e^{-e^{\delta\lambda_{n}}},$ the claim follows.
\end{proof}
\subsection{Landau's condition}
\begin{theorem} \label{BohrstheoremLandau22}
Let $(LC)$ hold for $\lambda$. Then for all $\delta>0$ there is $C=C(\delta, \lambda)$ such that for all $D=\sum a_{n} e^{-\lambda_{n}s} \in \mathcal{D}^{ext}_{\infty}(\lambda)$ and all $N \in \mathbf{N}$ we have 
\begin{equation*}
\sup_{[Re>0]}\left|\sum_{n=1}^{N} a_{n}e^{-\lambda_{n}s}\right|\le Ce^{\delta \lambda_{N}} \|D\|_{\infty}.
\end{equation*}
In particular, $\lambda$ satisfies Bohr's theorem.
\end{theorem}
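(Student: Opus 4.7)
The plan is to apply our main estimate (Theorem \ref{keylemma}) with an exponent $k=k_N$ chosen as the ``suggested'' value $k_N:=e^{-\delta\lambda_N}$, exactly as previewed after the statement of the main result in the introduction. Since $\Gamma(k+1)/k\le 1/k$ on $(0,1]$, Theorem \ref{keylemma} gives
\[
\|S_N\|\le \tilde C\,\frac{1}{k_N}\left(\frac{\lambda_{N+1}}{\lambda_{N+1}-\lambda_N}\right)^{k_N}=\tilde C\,e^{\delta\lambda_N}\left(\frac{\lambda_{N+1}}{\lambda_{N+1}-\lambda_N}\right)^{k_N},
\]
with $k_N\in(0,1]$ automatic since $\lambda_N\ge 0$. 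The task is thus reduced to showing that the remaining factor $(\lambda_{N+1}/(\lambda_{N+1}-\lambda_N))^{k_N}$ is uniformly bounded by a constant depending only on $\delta$ and $\lambda$.

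This is where $(LC)$ enters, and also where the main technical subtlety lies: $(LC)$ controls the gap $\lambda_{N+1}-\lambda_N$ from below but gives no a priori upper bound on $\lambda_{N+1}$ in terms of $\lambda_N$. I would therefore split into two regimes. If $\lambda_{N+1}-\lambda_N\ge \lambda_N$, then $\lambda_{N+1}/(\lambda_{N+1}-\lambda_N)\le 2$, and the $k_N$-th power is trivially bounded by $2$. Otherwise $\lambda_{N+1}<2\lambda_N$, and applying $(LC)$ with an auxiliary $\delta'\in(0,\delta)$ yields
\[
\frac{\lambda_{N+1}}{\lambda_{N+1}-\lambda_N}\le \frac{2\lambda_N}{C_0}\,\exp\!\bigl(e^{\delta'\lambda_N}\bigr).
\]
Taking the logarithm of the $k_N$-th power produces $e^{-\delta\lambda_N}\log(2\lambda_N/C_0)+e^{(\delta'-\delta)\lambda_N}$, both of which vanish as $N\to\infty$ because $\delta'<\delta$ and $\log\lambda_N\ll e^{\delta\lambda_N}$. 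Hence the factor in question is uniformly bounded for large $N$; the finitely many initial values are absorbed into the constant.

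Combining the pieces yields $\|S_N\|\le C(\delta,\lambda)\, e^{\delta\lambda_N}$, which is the main inequality. The ``In particular'' assertion (Bohr's theorem for $\lambda$) then follows at once from Remark \ref{finalidea}: since the bound holds for every $\delta>0$, one has $\limsup_N \log\|S_N\|/\lambda_N\le\delta$ for every $\delta>0$, forcing this $\limsup$ to be zero. The only real obstacle, as indicated above, is the absence of an upper bound on $\lambda_{N+1}$ under $(LC)$; the dichotomy $\lambda_{N+1}-\lambda_N\ge\lambda_N$ vs.\ $\lambda_{N+1}-\lambda_N<\lambda_N$ sidesteps this by invoking $(LC)$ only in the regime where it is actually informative, and uses a crude direct bound in the regime where the gap is already comparable to $\lambda_N$.
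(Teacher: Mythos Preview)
Your proof is correct and follows essentially the same strategy as the paper: apply Theorem~\ref{keylemma} with $k_N=e^{-\delta\lambda_N}$ and check that the residual factor $\bigl(\lambda_{N+1}/(\lambda_{N+1}-\lambda_N)\bigr)^{k_N}$ stays bounded. The only difference lies in how the absence of an upper bound on $\lambda_{N+1}$ is handled: the paper first refines $\lambda$ by inserting additional points so that $\lambda_{n+1}-\lambda_n\le 1$ (hence $\lambda_{N+1}\le\lambda_N+1$), after checking that the refined sequence still satisfies $(LC)$; you avoid any modification of $\lambda$ via the dichotomy $\lambda_{N+1}-\lambda_N\gtrless\lambda_N$, invoking $(LC)$ only in the regime $\lambda_{N+1}<2\lambda_N$. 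Your route is slightly more self-contained, while the paper's reduction is reusable verbatim in the later proofs under $(BC)$ and the polynomial-growth condition. As a minor remark, in your second regime the choice $\delta'=\delta$ already works (the term $e^{(\delta'-\delta)\lambda_N}$ then equals $1$), so the strict inequality $\delta'<\delta$ is not actually needed.
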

\begin{proof} W.l.o.g we may assume that $\lambda_{n+1}-\lambda_{n}\le 1$ for all n. Indeed, suppose that $\lambda_{n+1}-\lambda_{n}>1$ for some $n$,  and let $l$ be the smallest natural number such that $l\ge \lambda_{n+1}-\lambda_{n}$. Then we define a new frequency $\lambda^{2}$ by adding more numbers to $\lambda$, so that $\lambda^{2}$ satisfies $(LC)$ and $\sup_{n} \lambda_{n+1}^{2}-\lambda_{n}^{2}\le 1$. We follow two steps. First, whenever $l\ge 3$, we add $\lambda_{n}+1, \lambda_{n}+2,\ldots , \lambda_{n}+l-2$ to $\lambda$. Since $1<|\lambda_{n+1}-(\lambda_{n}+l-2)|\le 2$, this procedure gives a new frequency, say $\lambda^{1}$, such that $\sup_{n} \lambda_{n+1}^{1}-\lambda_{n}^{1}\le 2$. Moreover, $\lambda^{1}$ has $(LC)$, since $\lambda$ satisfies $(LC)$ by assumption. Now, we add more numbers to $\lambda^{1}$ to obtain the announced frequency $\lambda^{2}$. If $\lambda_{n+1}^{1}-\lambda_{n}^{1}>1$ for some $n$, then we add the number $w:=\frac{\lambda^{1}_{n+1}+\lambda_{n}^{1}}{2}$ to $\lambda^{1}$. In this way the new frequency $\lambda^{2}$ fulfils $\sup_{n} \lambda_{n+1}^{2}-\lambda_{n}^{2}\le 1$ and $(LC)$, which follows from the observation $\lambda^{1}_{n+1}-w=w-\lambda^{1}_{n}$.  Now let $\delta>0$ and set $k_{N}=e^{-\delta \lambda_{N}}$. Then by Theorem \ref{keylemma} and assuming $\lambda_{n+1}-\lambda_{n}\le 1$ for all n we obtain
\begin{align*}
\|S_{N}\|\le C \frac{\Gamma(k_{N}+1)}{k_{N}} \left(\frac{\lambda_{N+1}}{\lambda_{N+1}-\lambda_{N}} \right)^{k_{N}}\le C_{1} e^{\delta \lambda_{N}} (1+\lambda_{N})^{e^{-\lambda_{N} \delta}} \le C_{2} e^{\delta \lambda_{N}}.
\end{align*}
Now Corollary \ref{BohrCahenformula} gives $\sigma_{u}(D)\le \delta$ for all $\delta>0$ and so $\sigma_{u}(D)\le 0$.
\end{proof}

\begin{corollary} \label{Bohrstheorem2} Let $\lambda$ satisfy $(LC)$. Then to every $\sigma>0$ there is a constant $C_{1}=C_{1}(\sigma, \lambda)$ such that  for all $N \in \mathbf{N}$ and $D=\sum a_{n}e^{-\lambda_{n}s}\in \mathcal{D}_{\infty}^{ext}(\lambda)$ we have
\begin{equation*}
\sup_{[Re>\sigma]} \left| \sum_{n=1}^{N} a_{n} e^{-\lambda_{n}s } \right| \le C_{1} \|D\|_{\infty}.
\end{equation*}
\end{corollary}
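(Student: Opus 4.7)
The plan is to combine the quantitative bound from Theorem \ref{BohrstheoremLandau22} with an Abel summation argument, so that the exponential decay $e^{-\sigma\lambda_{n}}$ coming from passing to the half plane $[Re>\sigma]$ absorbs the sub-exponential growth $e^{\delta\lambda_{n}}$ in the partial sum bound on $[Re>0]$.

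First, since the partial sum $P_{N}(s):=\sum_{n=1}^{N}a_{n}e^{-\lambda_{n}s}$ is an entire function, the maximum modulus principle (or the remark after Proposition \ref{BohrCahen} in the paper) gives
\begin{equation*}
\sup_{[Re>\sigma]}|P_{N}(s)|=\sup_{t\in\mathbf{R}}|P_{N}(\sigma+it)|,
\end{equation*}
so it suffices to bound $|P_{N}(\sigma+it)|$ uniformly in $t\in\mathbf{R}$ and $N\in\mathbf{N}$. Write $e^{-\lambda_{n}(\sigma+it)}=e^{-\lambda_{n}it}\cdot e^{-\lambda_{n}\sigma}$ and set $U_{n}(t):=\sum_{k=1}^{n}a_{k}e^{-\lambda_{k}it}$. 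Abel summation (with $U_{0}=0$) then yields
\begin{equation*}
P_{N}(\sigma+it)=U_{N}(t)e^{-\lambda_{N}\sigma}+\sum_{n=1}^{N-1}U_{n}(t)\bigl(e^{-\lambda_{n}\sigma}-e^{-\lambda_{n+1}\sigma}\bigr).
\end{equation*}

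Next I would feed in Theorem \ref{BohrstheoremLandau22} with the choice $\delta=\sigma/2$: since $\lambda$ satisfies $(LC)$, there is a constant $C=C(\sigma,\lambda)$ such that $|U_{n}(t)|\le Ce^{\sigma\lambda_{n}/2}\|D\|_{\infty}$ uniformly in $t$. The boundary term satisfies $|U_{N}(t)|e^{-\lambda_{N}\sigma}\le Ce^{-\sigma\lambda_{N}/2}\|D\|_{\infty}\le C\|D\|_{\infty}$ since $\lambda_{N}\ge 0$. For the sum, using $e^{-\lambda_{n}\sigma}-e^{-\lambda_{n+1}\sigma}=\sigma\int_{\lambda_{n}}^{\lambda_{n+1}}e^{-\sigma x}dx$ and the pointwise bound $e^{\sigma\lambda_{n}/2}\le e^{\sigma x/2}$ for $x\ge\lambda_{n}$, I obtain
\begin{equation*}
\sum_{n=1}^{N-1}|U_{n}(t)|\bigl(e^{-\lambda_{n}\sigma}-e^{-\lambda_{n+1}\sigma}\bigr)\le C\sigma\|D\|_{\infty}\int_{\lambda_{1}}^{\lambda_{N}}e^{-\sigma x/2}dx\le 2C\|D\|_{\infty}.
\end{equation*}
Combining these two estimates produces a constant $C_{1}=C_{1}(\sigma,\lambda)$, independent of $N$, $t$, and $D$, with $|P_{N}(\sigma+it)|\le C_{1}\|D\|_{\infty}$, which is the desired bound.

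There is no real obstacle here, the argument is a routine exchange between the growth rate allowed by Theorem \ref{BohrstheoremLandau22} and the exponential decay coming from $\sigma>0$; the one thing to be careful about is choosing $\delta$ strictly smaller than $\sigma$ (any $\delta<\sigma$ works, $\delta=\sigma/2$ being the cleanest choice) so that the telescoping integral converges. Note in particular that the argument uses only Theorem \ref{BohrstheoremLandau22}, not the main Theorem \ref{keylemma} directly, so the dependence of $C_{1}$ on $\lambda$ enters solely through the constant supplied by $(LC)$.
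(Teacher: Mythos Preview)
Your proof is correct and follows essentially the same route as the paper: Abel summation on the line $[Re=\sigma]$ combined with the bound $|U_{n}(t)|\le Ce^{\delta\lambda_{n}}\|D\|_{\infty}$ from Theorem~\ref{BohrstheoremLandau22}, then the integral representation of the telescoping factors $e^{-\lambda_{n}\sigma}-e^{-\lambda_{n+1}\sigma}$ to absorb the growth. The only cosmetic difference is that the paper takes $\delta=\sigma$ and evaluates at $2\sigma+it$ (yielding the constant $C(1+\tfrac{1}{2\sigma^{2}})$), whereas you take $\delta=\sigma/2$ and work directly at $\sigma+it$; the two choices are trivially equivalent reparametrizations.
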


\begin{proof}  Let us write $S_{N}(it):=\sum_{n=1}^{N} a_{n} e^{-it \lambda_{n}}$ for simplicity and fix $\sigma>0$. Then by Abel summation and Theorem \ref{BohrstheoremLandau22}, choosing $\delta=\sigma$ in $(LC)$, we for all $t \in \mathbf{R}$ and $N \in \mathbf{N}$  have
\begin{align*}
\left|\sum_{n=1}^{N} a_{n} e^{-\lambda_{n}(2\sigma+it)} \right| & = \left| S_{N}(it)e^{-\lambda_{N}2\sigma}+ \sum_{n=1}^{N-1} S_{n}(it) \left(e^{-\lambda_{n}2\sigma}-e^{-\lambda_{n+1}2\sigma}\right) \right| \\ &\le C\|D\|_{\infty}\left(e^{\sigma \lambda_{N}} e^{-2\lambda_{N}\sigma}+ \frac{1}{2\sigma}\sum_{n=1}^{N-1} e^{\sigma \lambda_{n}} \int_{\lambda_{n}}^{\lambda_{n+1}}e^{-2\sigma x} dx \right) \\ &\le C\|D\|_{\infty}\left(e^{-\lambda_{N}\sigma}+ \frac{1}{2\sigma}\sum_{n=1}^{N-1}  \int_{\lambda_{n}}^{\lambda_{n+1}}e^{-\sigma x} dx \right)\\ &\le C\|D\|_{\infty}\left(1+ \frac{1}{2\sigma}\int_{0}^{\infty}  e^{-\sigma x} dx\right)=C\|D\|_{\infty}\left(1+\frac{1}{2\sigma^{2}}\right). \qedhere
\end{align*}
\end{proof}
\subsection{Bohr's condition}
We already know from Theorem \ref{BohrstheoremLandau22} that if $(BC)$ holds for $\lambda$, then $\lambda$ satisfies Bohr's theorem, since $(BC)$ implies $(LC)$ (Remark \ref{connections}). But the stronger assumption $(BC)$ improves the bound for the norm of $S_{N}$.
\begin{theorem}\label{Bohrstheorem} Let $(BC)$ hold for $\lambda$. Then there is a constant $C=C(\lambda)>0$ such that for all $D=\sum a_{n}e^{-\lambda_{n}s} \in \mathcal{D}_{\infty}^{ext}(\lambda)$ and all $N \in \mathbf{N}$ with $N\ge 2$
\begin{equation*}
\sup_{[Re>0]} \left| \sum_{n=1}^{N} a_{n} e^{-\lambda_{n}s } \right| \le C \lambda_{N} \|D\|_{\infty}.
\end{equation*}
\end{theorem}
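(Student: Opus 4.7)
The plan is to apply the main Theorem~\ref{keylemma} with the specific choice $k_N:=1/\lambda_N$, which is exactly the hint flagged in the introduction. For $N\ge 2$ we have $\lambda_N>0$ (since $\lambda$ is strictly increasing and $\lambda_1\ge 0$), and for all $N$ large enough we have $\lambda_N\ge 1$ so that $k_N\in(0,1]$ is admissible. The remaining finitely many values of $N$ with $\lambda_N<1$ can be absorbed into the constant $C(\lambda)$ by the trivial bound $\|S_N\|\le N$, which follows from Corollary~\ref{almost periodic} since $|a_n|\le\|D\|_{\infty}$.

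With $k_N=1/\lambda_N$ the prefactor $\Gamma(k_N+1)/k_N=\lambda_N\,\Gamma(1+1/\lambda_N)$ is bounded by $\lambda_N$, because $\Gamma(1+x)\le 1$ on $[0,1]$. The essential step is then to show that the gap factor
\begin{equation*}
\left(\frac{\lambda_{N+1}}{\lambda_{N+1}-\lambda_N}\right)^{1/\lambda_N}
\end{equation*}
stays bounded by a constant depending only on $\lambda$. Writing $\lambda_{N+1}/(\lambda_{N+1}-\lambda_N)=1+\lambda_N/(\lambda_{N+1}-\lambda_N)$ and plugging in $(BC)$ with some fixed $\delta>0$ yields, for $\lambda_N\ge 1$, the pointwise estimate $\lambda_{N+1}/(\lambda_{N+1}-\lambda_N)\le C'\lambda_N\,e^{(l+\delta)\lambda_N}$ with $C'=C'(\lambda,\delta)$. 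Raising to the $1/\lambda_N$-th power produces $(C')^{1/\lambda_N}\,\lambda_N^{1/\lambda_N}\,e^{l+\delta}$, and each factor is uniformly bounded: $(C')^{1/\lambda_N}\le\max(1,C')$, while $\lambda_N^{1/\lambda_N}\le e^{1/e}$ since $x\mapsto x^{1/x}$ attains its maximum on $[1,\infty)$ at $x=e$. Combining everything with Theorem~\ref{keylemma} gives $\|S_N\|\le C(\lambda)\lambda_N$, as required.

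The only subtle point is that the bound on the gap factor is genuinely uniform in $N$: the exponential lower bound $e^{-(l+\delta)\lambda_N}$ in $(BC)$ is precisely tuned so that raising it to the power $k_N=1/\lambda_N$ cancels the $\lambda_N$ appearing in the exponent, leaving only the harmless factor $\lambda_N^{1/\lambda_N}$. Morally this also explains why $(BC)$ produces the linear bound $C\lambda_N$, whereas under the weaker $(LC)$ in Theorem~\ref{BohrstheoremLandau22} one has to compensate a double-exponentially small gap and is forced to take $k_N=e^{-\delta\lambda_N}$, yielding only the exponential estimate $e^{\delta\lambda_N}$.
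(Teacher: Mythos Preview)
Your proof is correct and follows essentially the same route as the paper: both apply Theorem~\ref{keylemma} with the choice $k_N=1/\lambda_N$ and then show that the gap factor $(\lambda_{N+1}/(\lambda_{N+1}-\lambda_N))^{1/\lambda_N}$ is uniformly bounded using $(BC)$. The only cosmetic difference is that the paper first reduces to the case $\lambda_{n+1}-\lambda_n\le 1$ (by inserting auxiliary points, as in the proof of Theorem~\ref{BohrstheoremLandau22}) in order to control $\lambda_{N+1}^{1/\lambda_N}$, whereas you bypass this reduction by writing $\lambda_{N+1}/(\lambda_{N+1}-\lambda_N)=1+\lambda_N/(\lambda_{N+1}-\lambda_N)$ and bounding $\lambda_N^{1/\lambda_N}$ directly; the handling of the finitely many small $\lambda_N$ via the trivial bound $\|S_N\|\le N$ is fine.
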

\begin{proof}[Proof of Theorem \ref{Bohrstheorem}]Again w.l.o.g. we assume that $\lambda_{n+1}-\lambda_{n}\le 1$ following the same procedure as in the proof of Theorem \ref{BohrstheoremLandau22}. Then choosing  $k_{N}=\frac{1}{\lambda_{N}}$, $N\ge 2$ (since $\lambda_{1}=0$ is possible), by Theorem \ref{keylemma} we obtain
\begin{equation*}
\|S_{N}\|\le \frac{C_{1}}{k_{N}} \left(\frac{\lambda_{N+1}}{\lambda_{N+1}-\lambda_{N}} \right)^{k_{N}}\le  C_{2} \lambda_{N+1}^{k_{N}} \lambda_{N} \le C_{3} \lambda_{N}.\qedhere
\end{equation*}

\end{proof}
\begin{remark} For particular cases the bounds in Theorem \ref{Bohrstheorem} (and Theorem \ref{BohrstheoremLandau22}) may be bad (which is not surprising since this is an abstract result for all $\lambda$'s satisfying $(BC)$ respectively $(LC)$). For instance in the case $\lambda=(n)=(0,1,2,\ldots)$, since the projection $S_{N}(f)=\sum_{n=0}^{N} c_{n}(f)z^{n} \colon H_{\infty}(\mathbf{D})\to H_{\infty}(\mathbf{D})$ corresponds to convolution with the Dirichlet kernel $D_{N}(z)=\sum_{n=-N}^{N} z^{n}$ (after using the identification $H_{\infty}(\mathbf{D})=H_{\infty}(\mathbf{T})$), we obtain that $\|S_{N}\|=\|D_{N}\|_{1}\sim \log(N)$. To our knowledge the question about optimality of the bounds in the ordinary case $\lambda=(\log n)$ is still open.
\end{remark}
To put it differently the conditions $(BC)$ and $(LC)$ states that the sequence $\left(\log\left(\frac{1}{\lambda_{n+1}-\lambda_{n}}\right)\right)$ increases at most linearly respectively exponentially, and the quality of the growth gives different bounds for $\|S_{N}\|$. We consider now $\lambda's$ whose growth is somewhat in between:
\begin{equation} \label{polynomialgrowth}
\exists~ l,~d>0~ \forall ~\delta>0 ~\exists ~C>0~ \forall ~n \in \mathbf{N} \colon ~~\lambda_{n+1}-\lambda_{n}\ge C e^{-(l+\delta)\lambda_{n}^{d}}.
\end{equation}
Clearly (\ref{polynomialgrowth}) implies $(LC)$ and so Theorem \ref{BohrstheoremLandau22} holds, but for this class of frequencies Theorem \ref{keylemma} gives an improved bound for $\|S_{N}\|$. Recall that $\lambda=(\sqrt{\log(n)})$ satisfies (\ref{polynomialgrowth}) with $d=2$ (see proof of Remark \ref{connections}).
\begin{theorem} If $\lambda$ satisfies $(\ref{polynomialgrowth})$ with $d>0$, then there is a constant $C=C(d,\lambda)$ such that for all $D(s)=\sum a_{n}e^{-\lambda_{n}s} \in \mathcal{D}_{\infty}^{ext}(\lambda)$ and $N \in \mathbf{N}$ with $N\ge 2$
\begin{equation*}
\sup_{[Re>0]} \left| \sum_{n=1}^{N} a_{n} e^{-\lambda_{n}s } \right| \le C \lambda^{d}_{N} \|D\|_{\infty}.
\end{equation*}
\end{theorem}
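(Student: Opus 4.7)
The plan is to apply the main estimate Theorem \ref{keylemma} with a carefully chosen sequence $(k_N)$, exactly as in the proofs of Theorem \ref{BohrstheoremLandau22} and Theorem \ref{Bohrstheorem}, but calibrating the exponent to match the polynomial-exponential growth rate imposed by (\ref{polynomialgrowth}). First I would invoke a w.l.o.g. reduction to the situation $\lambda_{n+1}-\lambda_n \leq 1$ as done in the proof of Theorem \ref{BohrstheoremLandau22}; one only needs to verify that the point-insertion procedure preserves (\ref{polynomialgrowth}), which is clear since inserting a midpoint $w=(\lambda_n^1+\lambda_{n+1}^1)/2$ merely halves a gap, so the required lower bound $Ce^{-(l+\delta)\lambda_n^d}$ is preserved up to a factor of $2$ and up to replacing $l$ by $l$ with an adjusted constant.

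With this reduction in hand, I would fix a small $\delta > 0$ and choose $k_N := 1/\lambda_N^d$, defined for those $N$ with $\lambda_N \geq 1$ (the finitely many remaining indices being absorbed by the trivial bound $\|S_N\| \leq N$ from Corollary \ref{almost periodic}). Applying Theorem \ref{keylemma} with this $k_N$ and using (\ref{polynomialgrowth}) to lower-bound $\lambda_{N+1}-\lambda_N$, together with $\lambda_{N+1} \leq \lambda_N+1$, yields
\begin{equation*}
\|S_N\| \leq C_1 \frac{\Gamma(k_N+1)}{k_N} \left(\frac{\lambda_{N+1}}{\lambda_{N+1}-\lambda_N}\right)^{k_N} \leq C_2\, \lambda_N^d \left(\frac{\lambda_N+1}{C_0}\right)^{1/\lambda_N^d} e^{l+\delta}.
\end{equation*}
Since $\Gamma(k_N+1)$ remains bounded for $k_N \in (0,1]$, and $(\lambda_N+1)^{1/\lambda_N^d} \to 1$ as $N \to \infty$ because $\log(\lambda_N+1)/\lambda_N^d \to 0$ for any fixed $d>0$, the right-hand side is bounded by $C\,\lambda_N^d$ for a suitable constant $C=C(d,\lambda)$, completing the proof.

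The step I expect to require a bit of care is verifying that the w.l.o.g. reduction really preserves (\ref{polynomialgrowth}) uniformly in the inserted points; it is routine but must be written out so that the constants $l$ and $d$ (which are \emph{fixed} by the hypothesis, not free) remain valid for the enlarged frequency. Otherwise the argument is a transparent rescaling of the proof of Theorem \ref{Bohrstheorem}: where that proof used $k_N = 1/\lambda_N$ to neutralise a linear gap exponent under $(BC)$, here $k_N = 1/\lambda_N^d$ is tuned to cancel the polynomial exponent $\lambda_N^d$ appearing in (\ref{polynomialgrowth}), at the cost of replacing the linear factor $\lambda_N$ by $\lambda_N^d$ in the final estimate.
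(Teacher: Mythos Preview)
Your proposal is correct and follows essentially the same route as the paper: invoke the gap-normalisation reduction from the proof of Theorem~\ref{BohrstheoremLandau22}, then apply Theorem~\ref{keylemma} with $k_N = 1/\lambda_N^d$ and simplify using $(\lambda_N+1)^{1/\lambda_N^d}\to 1$. The paper's argument is terser (it compresses the endgame into the single line $\|S_N\|\le C_1\lambda_{N+1}^{k_N}\lambda_N^d\le C_2\lambda_N^d$), but your additional remarks on the preservation of \eqref{polynomialgrowth} under point insertion and on absorbing the finitely many small indices are exactly the details one would spell out in a careful write-up.
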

\begin{proof}[Proof of Theorem \ref{Bohrstheorem}] As before w.l.o.g. we assume that $\lambda_{N+1}-\lambda_{N}\le 1$. Then choosing  $k_{N}=\frac{1}{\lambda_{N}^{d}}$, $N\ge 2$, we obtain (with Theorem \ref{keylemma}) 
\begin{equation*}
\|S_{N}\| \le C_{1} \lambda_{N+1}^{k_{N}} \lambda_{N}^{d} \le C_{2} \lambda_{N}^{d}. \qedhere
\end{equation*}
\end{proof}
\subsection{$\mathbf{Q}$-linearly independent frequencies}
In \cite{Bohr3} Bohr proves that $\mathbf{Q}$-linearly independent $\lambda$'s satisfy the equality
\begin{equation*}
\sigma_{b}^{ext}=\sigma_{a}
\end{equation*}
for all somewhere convergent $\lambda$-Dirichlet series. In this section we give an alternative proof to Bohr's using Proposition \ref{approx} and the so-called Kronecker's theorem, which states that the set $\left\{ (e^{-\lambda_{n}it}) \mid t \in\mathbf{R}\right\}$ is dense in $\mathbf{T}^{\infty}$, whenever the real sequence $(\lambda_{n})$ is $\mathbf{Q}$-linearly independent. The latter is equivalent to the fact that for every choice of complex coefficient $a_{1}, \ldots, a_{N}$ the equality
\begin{equation} \label{kronecker}
\sup_{[Re>0]}\left| \sum_{n=1}^{N} a_{n} e^{-\lambda_{n}s} \right|=\sum_{n=1}^{N} |a_{n}|.
\end{equation}
holds. For a proof of the equivalence of Kronecker's theorem and $(\ref{kronecker})$ see \cite[\S VI.9]{Katznelson} and for a proof of Kronecker's theorem see e.g. \cite[\S 3.1, Example 3.7]{DefantSchoolmann} or again \cite[\S VI.9]{Katznelson}.
\begin{theorem} \label{independent} Let $D=\sum a_{n} e^{-\lambda_{n}s} \in \mathcal{D}^{ext}_{\infty}(\lambda)$ and let $\lambda$ be $\mathbf{Q}$-linearly independent. Then $(a_{n})\in \ell_{1}$ and $\|(a_{n})\|_{1}= \|D\|_{\infty}$. Moreover, isometrically
\begin{equation*}
\mathcal{D}^{ext}_{\infty}(\lambda)=\mathcal{D}_{\infty}(\lambda)=\ell_{1}
\end{equation*}
and $\lambda$ satisfies Bohr's theorem. In particular,
\begin{equation*}
\sup_{N\in \mathbf{N}} \sup_{[Re>0]} \left| \sum_{n=1}^{N} a_{n} e^{-\lambda_{n}s} \right| =\|D\|_{\infty}.
\end{equation*}
\end{theorem}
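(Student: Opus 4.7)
The plan is to combine the uniform convergence of the typical first means on smaller half-planes (Proposition~\ref{approx}) with Kronecker's equality \eqref{kronecker}, which under $\mathbf{Q}$-linear independence computes the sup-norm of any Dirichlet polynomial as the $\ell_1$-norm of its coefficients. The principal technical point is that \eqref{kronecker} is sharp on the boundary of $[Re > 0]$ whereas Proposition~\ref{approx} only supplies uniform convergence on the strictly smaller half-planes $[Re > \varepsilon]$; this gap I would bridge by a shift argument, followed by Fatou and monotone convergence as $\varepsilon \downarrow 0$.

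Concretely, fix any $k > 0$ and $\varepsilon > 0$. The Dirichlet polynomial $R_x^{k}(D)(s+\varepsilon)$ has coefficients $a_n(1-\lambda_n/x)^k e^{-\lambda_n \varepsilon}$ attached to the same $\mathbf{Q}$-linearly independent frequencies, so \eqref{kronecker} applied to it yields
\begin{equation*}
\sup_{[Re > \varepsilon]} |R_x^{k}(D)(s)| \;=\; \sup_{[Re > 0]} |R_x^{k}(D)(s+\varepsilon)| \;=\; \sum_{\lambda_n < x} |a_n|\bigl(1-\tfrac{\lambda_n}{x}\bigr)^{k} e^{-\lambda_n \varepsilon}.
\end{equation*}
Proposition~\ref{approx} tells us that the left-hand side converges, as $x \to \infty$, to $\sup_{[Re > \varepsilon]}|f(s)| \le \|D\|_{\infty}$. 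Fatou's lemma on $\mathbf{N}$, applied to the non-negative integrand $|a_n|(1-\lambda_n/x)^k \chi_{\{\lambda_n < x\}} e^{-\lambda_n \varepsilon}$ which tends pointwise to $|a_n| e^{-\lambda_n \varepsilon}$, then gives
\begin{equation*}
\sum_{n \ge 1} |a_n| e^{-\lambda_n \varepsilon} \;\le\; \liminf_{x \to \infty} \sum_{\lambda_n < x} |a_n|\bigl(1-\tfrac{\lambda_n}{x}\bigr)^{k} e^{-\lambda_n \varepsilon} \;\le\; \|D\|_{\infty}.
\end{equation*}
Letting $\varepsilon \to 0^{+}$ and invoking monotone convergence produces $\sum_n |a_n| \le \|D\|_{\infty}$, so $(a_n) \in \ell_1$.

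The converse inequality is immediate: with $(a_n) \in \ell_1$ the Weierstrass $M$-test gives uniform and absolute convergence of $\sum_n a_n e^{-\lambda_n s}$ on $[Re \ge 0]$, so the limit is continuous there and coincides with $f$ on some right half-plane by analytic continuation, hence on all of $[Re > 0]$. This simultaneously shows $D \in \mathcal{D}_\infty(\lambda)$, that $\lambda$ satisfies Bohr's theorem (uniform convergence on each $[Re > \varepsilon]$ being trivial), and that $\|D\|_{\infty} \le \sum_n |a_n|$. Combined with the previous paragraph this yields the isometry $\|(a_n)\|_{1} = \|D\|_{\infty}$; surjectivity of $D \mapsto (a_n)$ onto $\ell_1$ is clear from absolute convergence, giving $\mathcal{D}_\infty^{ext}(\lambda) = \mathcal{D}_\infty(\lambda) = \ell_1$. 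The final identity then follows by applying \eqref{kronecker} to the partial sums: $\sup_{[Re > 0]} |\sum_{n=1}^{N} a_n e^{-\lambda_n s}| = \sum_{n=1}^{N} |a_n|$, whose supremum over $N$ is precisely $\|D\|_{\infty}$.
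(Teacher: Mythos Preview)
Your proof is correct and follows essentially the same route as the paper: combine the uniform approximation by typical means from Proposition~\ref{approx} with Kronecker's identity \eqref{kronecker} to bound $\sum|a_n|$ by $\|D\|_\infty$, then get the reverse inequality from absolute convergence. The only cosmetic difference is that where you pass to the limit in $x$ via Fatou's lemma and then let $\varepsilon\downarrow 0$ by monotone convergence, the paper first truncates to a fixed $N$ (so the $x\to\infty$ limit becomes trivial on a finite sum) and then takes $\sup_N$ and $\sup_{\sigma>0}$; the underlying argument is the same.
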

\begin{proof} Let $f$ be the extension of $D$. Then by Proposition \ref{approx} for every $\sigma>0$ the polynomials
\begin{equation*}
R^{1}_{x}(D)=\sum_{\lambda_{n}<x} a_{n}
\left(1-\frac{\lambda_{n}}{x}\right)e^{-\lambda_{n}s} 
\end{equation*}
converge to $f$ uniformly on $[Re=\sigma]$ as $x\to \infty$. Together with $(\ref{kronecker})$ we for all $N \in \mathbf{N}$ obtain
\begin{align*}
\sum_{n=1}^{N} |a_{n}|&=\sup_{ \sigma >0 }\lim_{x\to \infty} \sum_{n=1}^{N}|a_{n}|\left(1-\frac{\lambda_{n}}{x}\right) e^{-\lambda_{n}\sigma}
\\ & \le \sup_{ \sigma > 0}\lim_{x\to \infty} \sum_{\lambda_{n} < x} |a_{n}|\left(1-\frac{\lambda_{n}}{x}\right) e^{-\lambda_{n}\sigma} \\ &=\sup_{ \sigma>0 }\lim_{x\to \infty} \sup_{t \in \mathbf{R}} \left| \sum_{\lambda_{n} < x} a_{n}\left(1-\frac{\lambda_{n}}{x}\right) e^{-\lambda_{n}(\sigma+it)} \right| \\ &=\sup_{\sigma>0} \sup_{t \in \mathbf{R}} \left|f(\sigma+it)\right|=\|D\|_{\infty}.
\end{align*}
So $(a_{n}) \in \ell_{1}$ with $\|(a_{n})\|_{1}\le \|D\|_{\infty}$ and $\sigma_{a}(D)\le 0$.  Hence
\begin{equation*}
\|D\|_{\infty}=\sup_{\sigma>0}\sup_{[Re>\sigma]} |D(s)|\le \sup_{\sigma>0} \sum_{n=1}^{\infty} |a_{n}|e^{-\sigma\lambda_{n}}\le \|(a_{n})\|_{1}. \qedhere
\end{equation*}
\end{proof}
 Let us summarize the results of Theorem \ref{BohrstheoremLandau22}, Theorem \ref{independent} and $(\ref{uiui})$.
\begin{remark} A frequency $\lambda$ satisfies Bohr's theorem if one the following conditions holds:
\begin{enumerate}
\item[(1)] $L(\lambda)=0$,
\item[(2)] $\lambda$ is $\mathbf{Q}$-linearly independent,
\item[(3)] $(LC)$.
\end{enumerate}
\end{remark}
In the theory of ordinary Dirichlet series, given $D=\sum a_{n}n^{-s}$, the so called $m$-homogeneous part of $D$ is the (formal) sum $\sum a_{n} n^{-s}$, where $a_{n}\ne 0$ implies $n$ only has $m$ prime factors counting multiplicity. Recall that $\mathcal{D}_{\infty}((\log n))$ and the space $H_{\infty}(B_{c_{0}})$ of all holomorphic and bounded functions on the open unit ball of $c_{0}$ are isometrically isomorphic via Bohr's transform
\begin{equation*}
\mathcal{B} \colon H_{\infty}(B_{c_{0}}) \to \mathcal{D}_{\infty}((\log n)), ~~f \mapsto \sum a_{n} n^{-s},
\end{equation*}
where $a_{n}:=c_{\alpha}(f)$ (the $\alpha$th Taylor coefficient of $f$) whenever $n=p^{\alpha}$ in its prime number decomposition. This identification links the space of all $m$-homogeneous Dirichlet series $\mathcal{D}_{\infty}^{(m)}((\log n))$ to the space of $m$-homogenous polynomials (or equivalently bounded $m$-linear forms) on $c_{0}$ (see \cite[\S 2, \S 3]{Defant}). In particular,  $\mathcal{D}^{(1)}_{\infty}((\log n))$ equals the space of all $1$-linear forms on $c_{0}$. Hence $\mathcal{D}_{\infty}^{(1)}((\log n))=c_{0}^{\prime}=\ell_{1}$.  Since $(\log p_{n})$, where is $p_{n}$ is the $n$th prime number, is $\mathbf{Q}$-linearly independent, Theorem \ref{independent} recovers this result.
\begin{corollary}
\begin{equation*}
\mathcal{D}_{\infty}^{(1)}((\log n))=\mathcal{D}_{\infty}((\log p_{n}))=\ell_{1}.
\end{equation*}
\end{corollary}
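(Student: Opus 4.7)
The plan is to reduce the corollary to a direct application of Theorem \ref{independent}, with the only substantive input being the $\mathbf{Q}$-linear independence of $(\log p_n)$.

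First I would observe that by the very definition of the $1$-homogeneous part given just above the corollary, a series in $\mathcal{D}_\infty^{(1)}((\log n))$ is of the form $\sum a_n n^{-s}$ where $a_n \neq 0$ forces $n$ to have exactly one prime factor counting multiplicity, i.e.\ $n = p_k$ for some prime $p_k$. Rewriting $p_k^{-s} = e^{-\log(p_k) s}$ identifies $\mathcal{D}_\infty^{(1)}((\log n))$ isometrically with $\mathcal{D}_\infty((\log p_n))$, since the $\|\cdot\|_\infty$-norms are defined via the same supremum over $[\mathrm{Re} > 0]$ of the common limit function. This takes care of the first equality in the statement.

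Second, I would verify that the frequency $\lambda := (\log p_n)$ is $\mathbf{Q}$-linearly independent. Given any finite rational sequence $(q_n)$ with $\sum q_n \log p_n = 0$, clear denominators to obtain integers $m_n$ with $\sum m_n \log p_n = 0$, i.e.\ $\prod p_n^{m_n} = 1$. Splitting positive and negative exponents and invoking the fundamental theorem of arithmetic forces all $m_n = 0$, hence all $q_n = 0$.

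Finally I would invoke Theorem \ref{independent} directly: since $\lambda = (\log p_n)$ is $\mathbf{Q}$-linearly independent, we have isometrically
\begin{equation*}
\mathcal{D}_\infty^{ext}((\log p_n)) = \mathcal{D}_\infty((\log p_n)) = \ell_1,
\end{equation*}
and combining with the identification from the first step finishes the proof.

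There is essentially no obstacle here; the work has all been done in Theorem \ref{independent}. The only content specific to this corollary is the elementary number-theoretic fact that the logarithms of distinct primes are $\mathbf{Q}$-linearly independent, which is immediate from unique factorization.
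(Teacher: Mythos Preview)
Your proposal is correct and matches the paper's own reasoning: the paper simply notes that $(\log p_n)$ is $\mathbf{Q}$-linearly independent and invokes Theorem \ref{independent}, with the first equality being the definitional identification you describe. There is nothing to add.
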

\subsection{A Montel theorem}In \cite[Lemma 18]{Bayart} Bayart proves that every bounded sequence $(D^{N})\subset \mathcal{D}_{\infty}(\log n)$ allows a subsequence $(D^{N_{k}})$ and some $D \in \mathcal{D}_{\infty}((\log n))$ such that $(D^{N_{k}})$ converges uniformly to $D$ on $[Re> \varepsilon]$ for all $\varepsilon>0$; a fact which is called 'Montel theorem' and extends to the following classes of $\lambda$'s.

\begin{theorem} \label{montel} Let $\lambda$ be a frequency statisfying $(LC)$, $L(\lambda)=0$ or let $\lambda$ be $\mathbf{Q}$-linearly independent. Let $(D_{j})\subset \mathcal{D}_{\infty}(\lambda)$ be a bounded sequence with Dirichlet coefficients $(a_{n}^{j})$. Then there is a subsequence $(j_{k})$ such that $(D_{j_{k}})$ converges uniformly on $[Re>\varepsilon]$ to $D=\sum a_{n} e^{-\lambda_{n}s} \in \mathcal{D}_{\infty}(\lambda)$ for all $\varepsilon>0$, where $a_{n}:=\lim_{k\to \infty} a_{n}^{j_{k}}$.
\end{theorem}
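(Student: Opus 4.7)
The plan is to reduce the theorem to an application of Proposition \ref{BohrCahen}, preceded by a diagonal extraction of a coefficient-wise convergent subsequence. By Corollary \ref{almost periodic} the Dirichlet coefficients satisfy $|a_{n}^{j}|\le \|D_{j}\|_{\infty}\le M$ for a uniform bound $M:=\sup_{j}\|D_{j}\|_{\infty}$. Hence for each fixed $n$ the sequence $(a_{n}^{j})_{j}$ is bounded in $\mathbf{C}$, and a standard diagonal procedure produces a subsequence $(j_{k})$ along which $a_{n}:=\lim_{k}a_{n}^{j_{k}}$ exists for every $n\in \mathbf{N}$.

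The decisive step is to show that the quantity
\begin{equation*}
\Delta=\Delta\bigl((D_{j_{k}})\bigr)=\limsup_{(N,k)\in \mathbf{N}^{2}}\frac{\log\bigl(\sup_{t\in \mathbf{R}}|\sum_{n=1}^{N}a_{n}^{j_{k}}e^{-\lambda_{n}it}|\bigr)}{\lambda_{N}}
\end{equation*}
from Proposition \ref{BohrCahen} satisfies $\Delta\le 0$; that proposition then yields uniform convergence of $(D_{j_{k}})$ to $D:=\sum a_{n}e^{-\lambda_{n}s}$ on $[Re>\varepsilon]$ for every $\varepsilon>0$. Each of the three hypotheses delivers $\Delta\le 0$ by a different route. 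If $L(\lambda)=0$, the crude estimate $\sup_{t}|\sum_{n=1}^{N}a_{n}^{j_{k}}e^{-\lambda_{n}it}|\le NM$ combined with $\log(N)/\lambda_{N}\to 0$ does the job. If $(LC)$ holds, Theorem \ref{BohrstheoremLandau22} applied to each $D_{j_{k}}$ yields, for any prescribed $\delta>0$, the uniform bound $\sup_{t}|\sum_{n=1}^{N}a_{n}^{j_{k}}e^{-\lambda_{n}it}|\le C(\delta,\lambda)e^{\delta\lambda_{N}}M$, forcing $\Delta\le \delta$ for every $\delta>0$. Finally, if $\lambda$ is $\mathbf{Q}$-linearly independent, Theorem \ref{independent} together with the Kronecker identity (\ref{kronecker}) gives the even stronger $\sup_{t}|\sum_{n=1}^{N}a_{n}^{j_{k}}e^{-\lambda_{n}it}|\le \|(a_{n}^{j_{k}})\|_{1}=\|D_{j_{k}}\|_{\infty}\le M$, which again produces $\Delta\le 0$.

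It remains to verify that the limit $D$ belongs to $\mathcal{D}_{\infty}(\lambda)$. Uniform convergence of $(D_{j_{k}})$ on each half plane $[Re>\varepsilon]$ implies both that $D$ converges at every point with strictly positive real part, so that $\sigma_{c}(D)\le 0$, and that $|D(s)|\le M$ there; letting $\varepsilon\to 0$ yields boundedness of $D$ on $[Re>0]$, whence $D\in \mathcal{D}_{\infty}(\lambda)$. The only real obstacle in this plan is the case split for the bound on $\Delta$: the diagonal extraction is automatic once the uniform bound on the coefficients is in hand, but driving $\Delta$ down to zero is exactly where the three different quantitative forms of Bohr's theorem proved earlier in this article are consumed, which explains why the Montel theorem is formulated precisely under these classes of frequencies.
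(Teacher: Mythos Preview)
Your proof is correct and follows essentially the same route as the paper: extract a coefficient-wise convergent subsequence via Corollary~\ref{almost periodic} and a diagonal argument, then feed the relevant quantitative bound on partial sums into Proposition~\ref{BohrCahen}. The only cosmetic difference is that the paper treats the case $L(\lambda)=0$ directly via absolute convergence (bounded coefficients plus $L(\lambda)=0$ force $\sigma_{a}(D_{j}),\sigma_{a}(D)\le 0$), whereas you route this case through Proposition~\ref{BohrCahen} as well, using the trivial bound $\|S_{N}\|\le N$; both work.
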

\begin{proof}
By Corollary \ref{almost periodic} we for all $n$, $j \in \mathbf{N}$ have
\begin{equation*}
|a_{n}^{j}|\le \|D_{j}\|_{\infty}\le \sup_{j } \|D_j\|_{\infty}=:C_{1}<\infty. \end{equation*}
Hence by diagonal process we find a subsequence $(j_{k})$ such that $\lim_{k} a_{n}^{j_{k}}=:a_{n}$ exists for all $n \in \mathbf{N}$. We (formally) define $D:=\sum a_{n} e^{-\lambda_{n}s}$. If $L(\lambda)=0$, then $\sigma_{a}(D)\le 0$ and $\sigma_{a}(D_{j})\le 0$ for all $j$, since the Dirichlet coefficients are bounded, and the claim follows easily. In the remaining cases Theorem \ref{BohrstheoremLandau22} respectively Theorem \ref{independent} together with Proposition \ref{BohrCahen} applied to the sequence $(D_{j_{k}})$ gives the claim. Indeed,  let $\varepsilon>0$ and let us first assume that $\lambda$ fulfils $(LC)$. Then for all $k$ and $N$ by Theorem \ref{BohrstheoremLandau22}
\begin{equation*}
\sup_{[Re>0]} \left| \sum_{n=1}^{N} a_{n}^{j_{k}} e^{-\lambda_{n}s}\right|\le C(\varepsilon)e ^{\varepsilon\lambda_{N}} \|D_{j_{k}}\|_{\infty}\le C(\varepsilon) C_{1}e^{\varepsilon\lambda_{N}}. 
\end{equation*}
Now Proposition \ref{BohrCahen} implies that $(D_{j_{k}})$ converges to $D$ on $[Re>2\varepsilon]$. If $\lambda$ is $\mathbf{Q}$-linearly independent, the claim follows in the same way replacing Theorem \ref{BohrstheoremLandau22} by Theorem \ref{independent}.
\end{proof}

\section{About completeness } \label{completenesschapter}
Recall that from Corollary \ref{almost periodic} we know that $(\mathcal{D}^{ext}_{\infty}(\lambda), \|\cdot\|_{\infty})$ is a normed space. In this section we face completeness. We first state sufficient conditions on $\lambda$ for completeness of $\mathcal{D}_{\infty}(\lambda)$ and $\mathcal{D}^{ext}_{\infty}(\lambda)$. Then we give a construction of $\lambda's$ for which $\mathcal{D}_{\infty}(\lambda)$ fails to be complete. We like to mention that in \cite{Maestre} it is already proven that $(BC)$ is sufficient for completeness of $\mathcal{D}_{\infty}(\lambda)$ by introducing the following condition, which is equivalent to $(BC)$:\begin{equation} \label{conditionMaestre}
\exists~ p\ge 1: \inf_{n \in \mathbf{N}} e^{p\lambda_{n+1}}-e^{p\lambda_{n}} >0.
\end{equation}
Their proof (see \cite[\S 2, Proposition 2.1]{Maestre}) shows that given $(\ref{conditionMaestre})$ the choice $l(\lambda):=p$ succeeds in $(BC)$ and given $l(\lambda)$ from $(BC)$ the choice $p:=l+\delta$ for every $\delta>0$ is admissable for $(\ref{conditionMaestre})$.
\medskip

Recall that being an isometric subspace of $H_{\infty}[Re>0]$ the spaces  $\mathcal{D}^{ext}_{\infty}(\lambda)$ and $\mathcal{D}_{\infty}(\lambda)$ are complete if and only if they are closed in $H_{\infty}[Re>0]$ (the space of all bounded and holomorphic functions on $[Re>0]$). 
\begin{theorem} \label{completeness}
If $L(\lambda)<\infty$, then $\mathcal{D}^{ext}_{\infty}(\lambda)$ is complete. The space $\mathcal{D}_{\infty}(\lambda)$ is a Banach space if one of the following conditions hold:
\begin{enumerate}
\item[1)] $\lambda$ is $\mathbf{Q}$-linearly independent,
\item[2)] $L(\lambda)=0$,
\item[3)] $\sigma_{b}^{ext}=\sigma_{c}$ and $L(\lambda)<\infty$. In particular, this holds for $\lambda$'s satisfying
\begin{enumerate}
\item[3.1)] $(BC)$,
\item[3.2)] $(LC)$ and $L(\lambda)<\infty$.
\end{enumerate}
\end{enumerate}
Moreover, all of the stated conditions are not necessary for completeness.
\end{theorem}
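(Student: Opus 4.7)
The proof splits naturally into three tasks: (i) completeness of $\mathcal{D}^{ext}_\infty(\lambda)$ under $L(\lambda)<\infty$, (ii) identification of $\mathcal{D}_\infty(\lambda)$ with $\mathcal{D}^{ext}_\infty(\lambda)$ (or with $\ell_1$) under each of conditions 1)--3), and (iii) producing a $\lambda$ that violates every listed condition but still yields a complete space. Throughout, the point of departure is that, via Corollary \ref{almost periodic}, both spaces are isometrically embedded in $H_\infty[\operatorname{Re}>0]$, so completeness is equivalent to closedness there.

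For (i), I would take a Cauchy sequence $(D_j)$ in $\mathcal{D}^{ext}_\infty(\lambda)$ with bounded holomorphic extensions $f_j$, and let $f\in H_\infty[\operatorname{Re}>0]$ be its limit (this exists since $H_\infty[\operatorname{Re}>0]$ is complete and $\|D_j\|_\infty=\|f_j\|_{H_\infty}$). Corollary \ref{almost periodic} yields $|a_n(D_j)-a_n(D_k)|\le\|D_j-D_k\|_\infty$, so the coefficients $a_n^j$ form a Cauchy sequence in $j$; let $a_n:=\lim_j a_n^j$, and note $|a_n|\le M:=\sup_j\|D_j\|_\infty<\infty$. Now set $D:=\sum a_n e^{-\lambda_n s}$. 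The hypothesis $L(\lambda)<\infty$ forces $\sigma_a(D)\le L(\lambda)$, so $D$ converges absolutely on $[\operatorname{Re}>L(\lambda)]$. For $s$ in this half-plane, dominated convergence in $j$ gives $f_j(s)=\sum_n a_n^j e^{-\lambda_n s}\to D(s)$, while $f_j\to f$ pointwise; hence $f=D$ on $[\operatorname{Re}>L(\lambda)]$, which shows $f$ is the extension of $D$ and $D\in\mathcal{D}^{ext}_\infty(\lambda)$.

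For (ii), in each sub-case I would show the natural inclusion $\mathcal{D}_\infty(\lambda)\subset\mathcal{D}^{ext}_\infty(\lambda)$ is in fact an equality (so completeness propagates from (i)), except case 1) which reduces to $\ell_1$ directly. Case 1) is immediate from Theorem \ref{independent}. Case 2): with $L(\lambda)=0$, any sequence of Dirichlet coefficients bounded by Corollary \ref{almost periodic} has $\sigma_a\le 0$, so a bounded holomorphic extension forces convergence on $[\operatorname{Re}>0]$. Case 3): the assumed identity $\sigma_b^{ext}=\sigma_c$ applied to $D\in\mathcal{D}^{ext}_\infty(\lambda)$ (for which $\sigma_b^{ext}(D)\le 0$) directly gives $\sigma_c(D)\le 0$. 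The two sub-items 3.1) and 3.2) follow from 3): under $(BC)$, Remark \ref{connections} gives $L(\lambda)<\infty$, and either $(BC)$ or $(LC)$ forces Bohr's theorem (Theorem \ref{BohrstheoremLandau22} and Theorem \ref{Bohrstheorem}), so $\sigma_u=\sigma_b^{ext}$, and together with the general inequality $\sigma_c\le\sigma_u$ this yields $\sigma_b^{ext}=\sigma_c$.

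Task (iii) is where the real work lies, and I expect it to be the main obstacle. The plan is to exhibit a single explicit $\lambda$ that is not $\mathbf{Q}$-linearly independent, has $L(\lambda)>0$, and fails $(LC)$, yet for which $\mathcal{D}_\infty(\lambda)$ is closed in $H_\infty[\operatorname{Re}>0]$. A natural strategy is to start from a frequency $\lambda^0=(\log n)$ (which satisfies $(BC)$ and hence gives a Banach space) and insert a very sparse sequence of perturbing frequencies at scales designed to destroy the minimal-gap estimates of $(BC)$ and $(LC)$ without introducing any new Dirichlet series in $\mathcal{D}^{ext}_\infty$ that fails to converge on $[\operatorname{Re}>0]$: for instance, doubling each $\log n$ with a close companion $\log n + \varepsilon_n$ with $\varepsilon_n$ decaying superexponentially, while ensuring by a direct Bohr--Cahen type estimate that bounded extensions still give convergence on $[\operatorname{Re}>0]$. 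Verifying that such a perturbation actually preserves the identity $\mathcal{D}_\infty(\lambda)=\mathcal{D}^{ext}_\infty(\lambda)$ (and thus completeness by (i), noting $L(\lambda)$ is unchanged so $L(\lambda)<\infty$ still holds) is where quantitative control of the partial sums from Theorem \ref{keylemma} should be crucial; the whole point of the main result is precisely that such control is available without any structural assumption on $\lambda$.
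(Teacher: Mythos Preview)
Your arguments for (i) and (ii) are correct and essentially coincide with the paper's proof: Cauchy sequences in $\mathcal{D}^{ext}_\infty(\lambda)$ converge in $H_\infty[\operatorname{Re}>0]$, coefficients converge by Corollary~\ref{almost periodic}, and $L(\lambda)<\infty$ forces the formal limit series to converge absolutely on $[\operatorname{Re}>L(\lambda)]$ where it agrees with the $H_\infty$-limit; the three conditions then all reduce to $\mathcal{D}_\infty(\lambda)=\mathcal{D}^{ext}_\infty(\lambda)$ (or to $\ell_1$ in case~1). One small imprecision: under $(BC)$ or $(LC)$ what you actually obtain from Bohr's theorem is $\sigma_b^{ext}=\sigma_u$, not literally $\sigma_b^{ext}=\sigma_c$; but since $\sigma_c\le\sigma_u$, this still yields $\sigma_b^{ext}(D)\le 0\Rightarrow\sigma_c(D)\le 0$, which is all that is needed for the equality of the two spaces.

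Your treatment of the ``Moreover'' clause, however, misreads the statement and is far harder than necessary. The claim is that \emph{each} listed condition is individually non-necessary, not that there is a single $\lambda$ violating all of them at once. The paper disposes of this in a few lines by playing the conditions off against one another: $\lambda=(\log n)$ satisfies $(BC)$ (hence $\mathcal{D}_\infty(\lambda)$ is complete) but has $L(\lambda)=1$, so condition~2) is not necessary; the frequency $\lambda_{2n-1}=n$, $\lambda_{2n}=n+e^{-n^2}$ has $L(\lambda)=0$ (hence complete) but fails $(BC)$; replacing $e^{-n^2}$ by $e^{-e^{n^2}}$ gives $L(\lambda)=0$ but $(LC)$ fails; and any $\mathbf{Q}$-linearly independent $\lambda$ growing slowly enough has $L(\lambda)=\infty$ while $\mathcal{D}_\infty(\lambda)=\ell_1$ is complete. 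No appeal to Theorem~\ref{keylemma} or any delicate perturbation argument is required; your proposed construction for (iii) is speculative and, even if it could be made to work, would prove a stronger statement than the theorem asserts.
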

Because of the different nature of the stated sufficient conditions on $\lambda$, it seems like we are far away from a characterization. In particular, it would be interesting to find a condition on $\lambda$ sufficient for $\sigma_{b}^{ext}=\sigma_{c}$, which is weaker than $(LC)$.
\begin{proof}[Proof of Theorem \ref{completeness}]
If $\lambda$ is $\mathbf{Q}$-linearly independent, then $\mathcal{D}_{\infty}(\lambda)=\ell_{1}$ as Banach spaces by Theorem \ref{independent}. So let us assume $L(\lambda)<\infty$. Then we claim that $\mathcal{D}^{ext}_{\infty}(\lambda)$ is a closed subspace of  $H_\infty([Re > 0])$. Indeed if $(D^{K})$ is a sequence in $\mathcal{D}^{ext}_{\infty}(\lambda)$ with Dirichlet coefficients $(a_{n}^{K})$, which converges
 to some $f \in H_{\infty}[Re>0]$, then $|a_{n}^{K}| \le \|D^{K}\|_{\infty}$  for all $n$, $K \in \mathbf{N}$ (Corollary \ref{almost periodic}). Hence the limits $a_{n}:=\lim_{K} a_{n}^{K}$ exist and $(a_{n})$ is bounded.  So the Dirichlet series $D:=\sum a_{n}e^{-\lambda_{n}s}$ converges absolutely on $[Re>L(\lambda)]$ and we claim that $D$ and $f$ coincide on $[Re>L(\lambda)]$. Let $s\in [Re>L(\lambda)]$, $Re ~s= \sigma$  and  $\varepsilon>0$. Then there is $K_{0}$ such that $|a_{n}-a_{n}^{K}|\le \varepsilon$ for all $K\ge K_{0}$ and all $n\in \mathbf{N}$. Then for such $K$ we obtain for large  $N$
\begin{align*}
|D(s)-f(s)|\le &\left| D(s)-\sum_{n=1}^{N}a_{n}e^{-\lambda_{n}s}\right| +\left|\sum_{n=1}^{N}(a_{n}-a_{n}^{K})e^{-\lambda_{n}s} \right|\\ & +\left|\sum_{n=1}^{N}a_{n}^{K} e^{-\lambda_{n}s} -D^{K}(s)\right| +\left|D^{K}(s)-f(s) \right| \\ &\le \varepsilon+\varepsilon \sum_{n=1}^{\infty} e^{-\lambda_{n}\sigma} +\varepsilon+\varepsilon,
\end{align*}
which implies $D(s)=f(s)$ on $[Re>L(\lambda)]$. Hence  $D\in \mathcal{D}^{ext}_{\infty}(\lambda)$ with extension $f$ and $\mathcal{D}_{\infty}^{ext}(\lambda)$ is complete, which coincides with $\mathcal{D}_{\infty}(\lambda)$ assuming $\sigma_{b}^{ext}=\sigma_{c}$.\\
Now we come to the 'Moreover' part. Since $L((\log n))=1$ and $\mathcal{D}_{\infty}((\log n))$ is complete, the condition '$L(\lambda)=0$' is not necessary. Moreover, the frequency defined by $\lambda_{2n}=n+e^{-n^{2}}$ and $\lambda_{2n-1}=n$ does not satisfy $(BC)$ but $L(\lambda)=0$. Hence the condition $(BC)$ is not necessary. The frequency defined by $\lambda_{2n}=n+e^{-e^{n^{2}}}$ and $\lambda_{2n-1}=n$ does not satisfy $(LC)$ but $L(\lambda)=0$. Finally, by choosing a $\mathbf{Q}$-linearly independent $\lambda$ increasing slowly enough we see that the condition '$L(\lambda)<\infty$' is not necessary.
\end{proof}
On the other hand in the following sense there are infinitely $\lambda$'s for which $\mathcal{D}_{\infty}(\lambda)$ fails to be complete.
\begin{theorem} \label{incomplete}
Let $\lambda$ be a frequency. Then there is a strictly increasing sequence $(s_{n})$ of natural numbers such that the space $\mathcal{D}_{\infty}(\eta)$, where the frequency $\eta$ is obtained by ordering the set
\begin{equation} \label{Neder}
\left\{ \lambda_{n}+\frac{j}{s_{n}}(\lambda_{n+1}-\lambda_{n}) \mid n \in \mathbf{N}, ~j=0, \ldots s_{n}-1\right\}
\end{equation}
increasingly, is not complete.
\end{theorem}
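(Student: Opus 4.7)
The plan is to construct, via a careful choice of the integers $(s_n)$, a Cauchy sequence $(T_N)$ of $\eta$-Dirichlet polynomials in $(\mathcal{D}_\infty(\eta), \|\cdot\|_\infty)$ whose limit $g \in H_\infty[\mathrm{Re}>0]$ fails to lie in $\mathcal{D}_\infty(\eta)$. By Corollary \ref{almost periodic} the $\eta$-Dirichlet coefficients of any bounded function on $[\mathrm{Re}>0]$ are uniquely determined, and for $g = \lim T_N$ they coincide with the stabilised coefficients of the $T_N$. Hence it will suffice to force the unique $\eta$-Dirichlet series representing $g$ to diverge at a single point $s_0 \in [\mathrm{Re}>0]$.

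The structural fact I exploit is that the $n$-th block of $\eta$, formed by $\mu_{n,j} := \lambda_n + (j/s_n)(\lambda_{n+1}-\lambda_n)$ for $0 \le j < s_n$, is an arithmetic progression. With $z_n(s) := e^{-(\lambda_{n+1}-\lambda_n)s/s_n}$, every $\eta$-Dirichlet polynomial supported on block $n$ takes the form $e^{-\lambda_n s}\,P_n(z_n(s))$ for a polynomial $P_n$ of degree $<s_n$, and the Dirichlet partial sums across the block correspond exactly to the Taylor partial sums $(P_n)_K(z_n(s))$. Choosing $P_n$ with $\|P_n\|_{H_\infty(\mathbf{D})} \le 1$ and setting $B_n(s) := 2^{-n}\,e^{-\lambda_n s}\,P_n(z_n(s))$, we have $\|B_n\|_\infty \le 2^{-n}$, so $T_N := \sum_{n=1}^N B_n$ is Cauchy in $\|\cdot\|_\infty$ and converges to some $g \in H_\infty[\mathrm{Re}>0]$.

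The central step is to pick $s_n$ and $P_n$ so that $D := \sum_n B_n$ diverges at the point $s_0 = 1$. Set $z_n^{\ast} := z_n(1) = e^{-(\lambda_{n+1}-\lambda_n)/s_n}$. By the classical Lebesgue-constant estimate the Taylor partial sum operator $P \mapsto P_K$ on polynomials of degree $\le K$ has norm $\asymp \log K$ on $H_\infty(\mathbf{D})$, and this bound is attained at some point of $\mathbf{T}$; after a rotation we may assume this point is $1$. Bernstein's inequality then transfers the lower bound to nearby interior points: there is a universal $c > 0$ such that whenever $|z - 1| \le c/K$ one still finds a polynomial $P$ of degree $\le K$ with $\|P\|_{H_\infty(\mathbf{D})} \le 1$ and $|P_K(z)| \ge c \log K$. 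Choosing the strictly increasing integer sequence
\begin{equation*}
s_n \;\ge\; C_1\,(\lambda_{n+1}-\lambda_n)\,\exp\!\bigl(C_2\, 2^n e^{\lambda_n}\bigr)
\end{equation*}
forces $z_n^\ast$ into such a neighbourhood of $1$ for some index $K_n < s_n$ with $\log K_n \gtrsim 2^n e^{\lambda_n}$, yielding a polynomial $P_n$ with
\begin{equation*}
2^{-n}\, e^{-\lambda_n}\, |(P_n)_{K_n}(z_n^\ast)| \;\ge\; 2.
\end{equation*}
Thus the $\eta$-partial sums of $D$ at $s_0=1$ execute an excursion of size $\ge 2$ within each block $n$, and in particular are not Cauchy at $s_0$; i.e.\ $D$ diverges at $1 \in [\mathrm{Re}>0]$.

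By Corollary \ref{almost periodic} the $\eta$-coefficients of $g = \lim T_N$ agree with those of $D$, so $g \notin \mathcal{D}_\infty(\eta)$, and the sequence $(T_N) \subset \mathcal{D}_\infty(\eta)$ witnesses incompleteness. I expect the main technical obstacle to be producing, for each $n$, a unit-norm polynomial of degree $< s_n$ whose Taylor partial sum realises the Lebesgue-type $\log$-lower bound not on the unit circle but at the prescribed interior point $z_n^{\ast}$. This is precisely what forces $s_n$ to grow roughly like $\exp(2^n e^{\lambda_n})$ and explains why the freedom to choose $(s_n)$ appears in the statement.
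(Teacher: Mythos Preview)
Your argument is correct and follows the same architecture as the paper's proof: write the refined frequency $\eta$ block by block, place on block $n$ a polynomial $P_n$ in the variable $z_n(s)=e^{-(\lambda_{n+1}-\lambda_n)s/s_n}$ that is uniformly bounded on the disk, weight the blocks geometrically so the block sums form a Cauchy sequence in $\mathcal{D}_\infty(\eta)$, and then show that the induced formal $\eta$-series diverges at a fixed real point because some Taylor partial sum of $P_n$ is large there. The identification of the putative limit via Corollary~\ref{almost periodic} is exactly as in the paper.

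The difference lies in how the ``bounded polynomial with a large partial sum'' is produced. The paper uses the classical Fej\'er-type polynomials $F_m(z)=\sum_{j=1}^{2m-1}\frac{1}{m-j}z^{j}$ from \cite{EHP}: these are uniformly bounded on $\mathbf{D}$, and since the coefficients $\frac{1}{m-j}$ for $j<m$ are positive, the half-sum at a real point $z=e^{-\alpha}\in(0,1)$ is bounded below by $e^{-m\alpha}\sum_{j=1}^{m-1}\frac{1}{j}$; choosing $s_n$ large enough that $m\alpha$ stays bounded gives the $\log m$ lower bound directly, with no perturbation argument. Your route obtains the same conclusion abstractly: first realise the Lebesgue constant at $z=1$ and then transfer it to the nearby interior point $z_n^\ast$ via Bernstein's inequality. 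This works (the degree of $S_K P$ is $K$, so the loss in moving a distance $O(1/K)$ is $O(\|S_K P\|_\infty)=O(\log K)$, harmless), but it introduces an extra step that the explicit example avoids. In return, your argument is cleaner in bookkeeping: you weight by $2^{-n}$ per block and do not need the paper's reduction to $\lambda_{n+1}-\lambda_n\le 1$ or the grouping of the $\lambda_n$ into unit intervals $I_k$. The Fej\'er polynomials are in fact the canonical witnesses for the Lebesgue-constant lower bound you invoke, so the two proofs are really two presentations of the same idea; the paper's is more explicit, yours is more conceptual.
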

\begin{proof} First we explain why it is sufficient to assume $\lambda_{n+1}-\lambda_{n}\le 1$ for all $n$. Therefore, suppose $\sup_{n \in \mathbf{N}} \lambda_{n+1}-\lambda_{n}>1$. Then for each $n$ we consider the interval $[\lambda_{n},\lambda_{n+1}]$ and add equidistantly new numbers to $\lambda$ in $[\lambda_{n},\lambda_{n+1}]$, such that the distance of these new numbers (inclusive the edge point $\lambda_{n}$ and $\lambda_{n+1}$) is less than $1$. Since for each interval  $I_{n}$ we only add finitely many new numbers we obtain in this way a new frequency (with subsequence $\lambda$), say $\widetilde{\lambda}$,  satisfying $\widetilde{\lambda_{n+1}}-\widetilde{\lambda_{n}}\le 1$ for all $n$. Suppose now that we are able to find a sequence $(\widetilde{s_{n}})$ for $\widetilde{\lambda}$ as stated in the theorem and we denote by $\widetilde{\eta}$ the corresponding new frequency. Then we already know that $\mathcal{D}_{\infty}(\widetilde{\eta})$ is not complete. Now we want to add again more numbers to $\widetilde{\eta}$ in such a way that the frequency $\eta$ obtained is of the form $(\ref{Neder})$ for some suitable sequence $(s_{n})$. Then clearly $\mathcal{D}_{\infty}(\eta)$ remains incomplete.
Note that each interval $[\lambda_{n}, \lambda_{n+1}]$ contains now finitely many intervals of the form $[\widetilde{\lambda_{j}},\widetilde{\lambda_{j+1}}]$. By assumption each of these interval is decomposed into  equidistant parts by $\widetilde{s_{j}}$. Since there are only finitely many of them, choose the smallest distance and add (finitely) many numbers to the interval $[\lambda_{n}, \lambda_{n+1}]$ such that within $[\lambda_{n}, \lambda_{n+1}]$ all added numbers are equidistant (this is always possible). Denoting by $s_{n}$ the number in $[\lambda_{n},\lambda_{n+1}]$ we add to $\lambda$  by this procedure, we obtain the desired frequency $\eta$ of the form (\ref{Neder}).\\
So we may assume that $\sup_{n} \lambda_{n+1}-\lambda_{n}\le 1$. Now let $I_{k}:=\{\lambda_{n} \mid k\le \lambda_{n}<k+1\}$, $k\in \mathbf{N}_{0}$, and $x>0$ arbitrary. If $|I_{k}|\ne 0$, where $|I_{k}|$ denotes the number of elements in $I_{k}$, we define $b_{k}=e^{-xk}|I_{k}|^{-1}$, and if $|I_{k}|=0$, then $b_{k}=0$. Note that $|I_{k}|<\infty$, since $\lambda$ is strictly increasing. If $\lambda_{n} \in I_{k}$ for some $k$ we define $r_{n,k}$ to be the largest natural number smaller than $e^{e^{2x\lambda_{n}}|I_{k}|}$. Following the spirit of Neder in \cite[\S 1]{Neder} we define the new frequency $\eta$  given by the following set
\begin{equation*}
\bigcup_{k\in \mathbf{N}} \left\{ \lambda_{n}+\frac{j}{2r_{n,k}}(\lambda_{n+1}-\lambda_{n}) \mid \lambda_{n}\in I_{k}, ~j=0, \ldots 2r_{n,k}-1\right\}.
\end{equation*}
 Then we (formally) define  the $\eta$-Dirichlet series $D=\sum a_{m}e^{-\eta_{m} s},$ by setting
$a_{m}=b_{k}\frac{1}{r_{n,k}-j}$, if $\eta_{m}=\lambda_{n}+\frac{j}{2r_{n,k}}(\lambda_{n+1}-\lambda_{n})$ for some $\lambda_{n}\in I_{k}$ and $j\in \{1,\ldots, 2r_{n,k}-1\}\setminus \{r_{n}\}$. The remaining $a_{m}$'s equal zero. Now we claim that $\sigma_{c}(D)\ge x>0$, which shows $D \notin \mathcal{D}_{\infty}(\eta)$. If $\lambda_{n}\in I_{k}$, then by the choice of $b_{k}$ and $r_{n,k}$
\begin{align*}
\sum_{\lambda_{n}\le \eta_{m} <\lambda_{n}+\frac{1}{2}(\lambda_{n+1}-\lambda_{n})} a_{m}e^{-x \eta_{m}}&=b_{k} \sum_{j=1}^{r_{n,k}-1} \frac{1}{r_{n,k}-j} e^{-x (\lambda_{n}+\frac{j}{2r_{n,k}}(\lambda_{n+1}-\lambda_{n}))} \\ &\ge b_{k}e^{-x} e^{-x \lambda_{n}}  \sum_{j=1}^{r_{n,k}-1} \frac{1}{j}\ge b_{k} e^{-x} e^{-x \lambda_{n}}  \log(r_{n,k}) \\ & \ge b_{k}e^{-x} e^{-x \lambda_{n}} \frac{1}{4}e^{\lambda_{n}2x}|I_{k}|= \frac{e^{-x}}{4}e^{-xk}e^{\lambda_{n}x}\ge \frac{e^{-x}}{4}>0.
\end{align*}
Hence $D$ does not converge in $x$. Now we claim that $\mathcal{D}_{\infty}(\eta)$ is not complete. Therefore recall (see \cite[\S 4]{EHP}) that the Fej\'{e}r polynomials $F_{m}(z):= \sum_{j=1}^{2m-1} \frac{1}{m-j} z^{j}$, $m \in\mathbf{N}$, where $\frac{1}{0}:=0$,
satisfy 
\begin{equation*}
\sup_{m \in \mathbf{N}} \sup_{|z|<1} |F_{m}(z)| =:C<\infty.
\end{equation*}

Now consider the sums
\begin{align*}
D^{K}(s)&:=\sum_{k=1}^{K} b_{k} \sum_{\lambda_{n}\in I_{k}} e^{-\lambda_{n}s}F_{r_{n,k}}(e^{-s\frac{1}{2r_{n}}(\lambda_{n+1}-\lambda_{n})})\\ &=\sum_{n=1}^{K}b_{k} \sum_{j=1}^{2r_{n,k}-1} \frac{1}{r_{n,k}-j}e^{-\left(\lambda_{n}+\frac{j}{2r_{n,k}}(\lambda_{n+1}-\lambda_{n})\right)s},
\end{align*}
which are partial sums of $D$ defined before and $(D^{K}) \subset \mathcal{D}_{\infty}(\eta)$. Moreover, for each $s \in  [Re>0]$ and $K>L$
\begin{align*}
|D^{K}(s)-D^{L}(s)|\le C \sum_{k=K}^{L} b_{k}  |I_{k}|\le \sum_{k=K}^{\infty} (e^{-x})^{k}
\end{align*}
 and so $(D^{K})$ is Cauchy in $\mathcal{D}_{\infty}(\eta)$. Assuming now that  $\mathcal{D}_{\infty}(\eta)$ is  complete, the limit of $(D^{K})$ in $\mathcal{D}_{\infty}(\eta)$ is $D$,  since $a_{m}=\lim_{K} a_{m}(D^{K})$. But we already know that $D \notin \mathcal{D}_{\infty}(\eta)$, which gives a contradiction. 
\end{proof}

\textbf{Acknowledgement:}
 I thank Antonio P\'{e}rez for several discussions on Dirichlet series, which in particular led to Theorem \ref{independent}, and I thank the referees for careful reading and their helpful critique.

\vspace{\baselineskip}

\end{document}